\documentclass[12pt,psamsfonts]{article}
\usepackage[dvips]{epsfig}
\usepackage{graphics}
\usepackage{amsmath}
\usepackage{amsfonts}
\usepackage{amssymb}
\usepackage{amsxtra}
\usepackage{eufrak}
\usepackage[mathscr]{eucal}
\usepackage{amsthm}
\usepackage{bm}
\usepackage{enumerate}
\usepackage[mathscr]{eucal}
\usepackage{inputenc}
\usepackage[T2A]{fontenc}
\DeclareSymbolFont{rsfs}{U}{rsfs}{m}{n}
\DeclareSymbolFontAlphabet{\mathscrn}{rsfs} \theoremstyle{plain}
\newtheorem{theorem}{Theorem}[section]
\newtheorem{lemma}{Lemma}[section]
\newtheorem{definition}{Definition}[section]

 \numberwithin{equation}{section}

\begin{document}
\title{\textbf{The truncated Fourier operator. III}}
\author{\textbf{Victor Katsnelson} \and \textbf{Ronny Machluf}}
\footnotetext{\hspace*{-4.0ex}\textbf{Mathematics Subject
Classification: (2000).} Primary 47A38; Secondary 47B35, 47B06,
47A10.\endgraf \hspace*{-2.0ex}\textbf{Keywords:}
Fourier-Plancherel operator, quantum harmonic oscillator, the
ladder operators method, creation and annihilation operators,
 the Melline transform.}
\date{\ }
 \maketitle
 \abstract{The spectral theory of the Fourier operator
 (non-truncated) is expounded. The known construction of
 basis of eigenvectors consisting of the Hermite functions
 is presented. The detail description of the eigenspaces in the spirit
 of a work by Hardy and Titchmarsh is done.}

\setcounter{section}{2}
\section{Spectral theory of the non-truncated Fourier-Plancherel operator}
This manuscript may be considers a continuation of the manuscripts
\cite{KaMa1} and \cite{KaMa2}, but can be read independently of
them.

The  manuscript contains Section 3 of our work. In this section we
expound the spectral theory of the non-truncated Fourier operator,
that is of the operator \(\mathscr{F}_E\), where \(E=\mathbb{R}\).
Here we denote this operator by \(\mathscr{F}\) omitting the
subscript \(E\).

The spectral theory of the operator \(\mathscr{F}\) is closely
related to the spectral theory of the Hermite differential
operator%
 \begin{equation}%
 \mathscr{L}=-\frac{d^2\,\,}{dt^2}+t^2.%
 \end{equation}
The eigenfunction and eigenvalues of the Hermite differential
operator can be found explicitly without directly solving the
differential equation, but using an algebraic method knows as
\emph{method of ladder operators}. This method was found by
P.A.M.\,Dirac. (See \cite[Chapt.\,6,\,Sect.34]{Dir}. The first
edition of this book appeared on 1930.) See also \cite[Chapter 6,
Section 6-2]{DiWi}.
 In the physical literature
the operator which we named as the Hermite operator is usually
named as the Hamiltonian of the one-dimensional quantum harmonic
oscillator. One of the mail algebraic properties of the Fourier
transform \(\mathscr{F}\) on the whole real axis is the relation
of \(\mathscr{F}\) to the operator \(M\) of multiplication by the
independent variable:
\begin{equation*}%
Mx(t)=tx(t)\,
\end{equation*}
as well as to the operator \(D\) of differentiation with respect
to the independent variable
\begin{equation*}%
Dx(t)=\frac{dx(t)}{dt}\,.
\end{equation*}
Let \(x(t)\) be a smooth function vanishing fast enough on the
infinity. Then, taking the derivative under the integral, we
obtain.
\begin{equation*}%
\frac{d\,\,}{dt}\int\limits_{-\infty}^{\infty}x(\xi)e^{it\xi}\,d\xi=
i\int\limits_{-\infty}^{\infty}\xi{}x(\xi)e^{it\xi}\,d\xi\,.
\end{equation*}
Integrating by parts, we obtain
\begin{equation*}%
\int\limits_{-\infty}^{\infty}\frac{dx(\xi)}{d\xi}\,e^{it\xi}\,d\xi=
-it\int\limits_{-\infty}^{\infty}\,x(\xi)e^{it\xi}\,d\xi\,.
\end{equation*}
The last two equalities can be symbolically written as
\begin{subequations}
\label{CRL}
\begin{gather}
\label{CRL.a}
D\mathscr{F}=i\mathscr{F}M,\\
\label{CRL.b}%
 M\mathscr{F}=i\mathscr{F}D\,.
\end{gather}
\end{subequations}

The equalities \eqref{CRL} prompt us to use operators \(D\) and
\(M\) as building blocks for construction of differential
operators commuting with \(\mathscr{F}\).

 The attempt to find the operator \(L\) commuting with
\(\mathscr{F}\) in the form of a linear combination of \(D\) and
\(M\): \(L=\alpha{}D+\beta{}M\), where \(\alpha\) and \(\beta\)
are numbers, does not yield success. Multiplying \eqref{CRL.b}
with \(\alpha\), \eqref{CRL.a} with \(\beta\) and adding, we
obtain
\[\mathscr{F}(\alpha{}D+\beta{}M)=i(\beta{}D+\alpha{}M)\mathscr{F}\,.\]
If we assume that \(\mathscr{F}L=L\mathscr{F}\) for \(L\) of the
form \(L=\alpha{}D+\beta{}M\), that is
\[\mathscr{F}(\alpha{}D+\beta{}M)=(\alpha{}D+\beta{}M)\mathscr{F},\]
then
\[(\alpha{}D+\beta{}M)\mathscr{F}=i(\beta{}D+\alpha{}M)\mathscr{F}\]
with these \(\alpha\) and \(\beta\). Since the operator
\(\mathscr{F}\) is invertible, we may cancel on \(\mathscr{F}\):
\[(\alpha{}D+\beta{}M)=i(\beta{}D+\alpha{}M)\,.\]
From the last equality it follows that
\[\alpha=i\beta, \quad \beta=i\alpha\,.\]
This homogeneous linear system has only trivial solution:
\mbox{\(\alpha=0,\,\beta=0\).}

However, the commutation equation od the form
\begin{equation}%
\label{EVP}%
 \mathscr{F}L=\kappa{}L\mathscr{F},\ \ \ L=\alpha{}D+\beta{}M,\ \
\textup{where}\ \ \kappa,\,\alpha,\,\beta\ \ \textup{are numbers},
\end{equation}%
is solvable. (This equation can be considered as an eigenvalue
problem. The number \(\kappa\) plays the role of an eigenvalue,
the pair \((\alpha,\,\beta)\) forms an eigenvector.) Substituting
\eqref{CRL} into \eqref{EVP}, we come to the equalities
\begin{equation}
\label{CEP}%
-i\beta=\kappa\alpha, \quad -i\alpha=\kappa\beta\,.
\end{equation}
The system \eqref{CEP} is has two eigenvalues: \(\kappa=i\) and
\(\kappa=-i\). The eigenvector corresponding to \(\kappa=i\) is:
\(\alpha=-1,\,\beta=1\), the eigenvector corresponding to
\(\kappa=i\) is: \(\alpha=1,\,\beta=1\). Let us introduce the
linear combinations \(\alpha{}D+\beta{}M\) corresponding to these
eigenvectors \(\alpha,\,\beta\):
\begin{subequations}
\label{CAO}%
\begin{align}
\label{CAO1}%
\mathfrak{a}^{\dag}&=-\frac{d\,\,}{dt}+t\,,\\
\label{CAO2}%
\mathfrak{a}^{\phantom{\dag}}&=\phantom{+}\frac{d\,\,}{dt}+t\,.
\end{align} The last two formulas may be rewritten  in the
following way
\begin{equation}
\label{CAO3}%
(\mathfrak{a}^{\dag}x)(t)=
-e^{\frac{t^2}{2}}\frac{d\,}{dt}\Big(e^{-\frac{t^2}{2}}x(t)\Big),\quad
(\mathfrak{a}x)(t)=e^{-\frac{t^2}{2}}\frac{d\,}{dt}%
\Big(e^{\frac{t^2}{2}}x(t)\Big)\,.
\end{equation}
\end{subequations}
The differential operator \(\mathfrak{a}^{\dag}\) is said to be
the \emph{creation operator};\\ the  differential operator
\(\mathfrak{a}\) is said to be the \emph{annihilation operator}.\\
(The terminology comes from  the quantum physics.)

Here we consider the operators \(\mathfrak{a}^{\dag}\) and
\(\mathfrak{a}\) as formal differential expressions and do not
discuss their domains of definition. The algebraic relations in
which these operators are involved is all what is important for
us. In this stage of consideration we may assume that the domains
of definition \(\mathcal{D}_{\mathfrak{a}^{\dag}}\) and
\(\mathcal{D}_{\mathfrak{a}}\) of the operators
\(\mathfrak{a}^{\dag}\) and \(\mathfrak{a}\) coincides with the
Schwartz space \(\mathscrn{S}(\mathbb{R})\):
\(\mathcal{D}_{\mathfrak{a}^{\dag}}=\mathcal{D}_{\mathfrak{a}}=\mathscrn{S}(\mathbb{R})\,.\)
\begin{definition}%
\label{DefSchSp}
 The Schwartz space  \(\mathscrn{S}(\mathbb{R})\)
consists of all functions \(x(t)\) which are defined and
infinitely differentiable on the whole real axis and satisfy the
condition
\begin{equation}%
\label{SchSp}%
\sup\limits_{t\in\mathbb{R}}\left|t^p\frac{d^px(t)}{dt}\right|<\infty
\quad{}\text{for every integer\ \ }p\geq{}0,\,q\geq{}0\,.
\end{equation}%
\end{definition}

The operator \(\mathscr{F}\) maps the set
\(\mathscrn{S}(\mathbb{R})\) onto itself,
\(\mathscr{F}\mathscrn{S}(\mathbb{R})=\mathscrn{S}(\mathbb{R})\),
the operators  \(\mathfrak{a}^{\dag}\) and \(\mathfrak{a}\) map
\(\mathscrn{S}(\mathbb{R})\) into itself:
\[\mathscr{F}\mathscrn{S}(\mathbb{R})=\mathscrn{S}(\mathbb{R}),\ \
\mathfrak{a}^{\dag}\mathscrn{S}(\mathbb{R})\subseteq\mathscrn{S}(\mathbb{R}),\
\
\mathfrak{a}\mathscrn{S}(\mathbb{R})\subseteq\mathscrn{S}(\mathbb{R})\,.
\]

 The operators  \(\mathfrak{a}^{\dag}\) and \(\mathfrak{a}\)
satisfy the fundamental commutational relation
\begin{equation}
\label{FCR}
\mathfrak{a}\mathfrak{a}^{\dag}-\mathfrak{a}^{\dag}\mathfrak{a}=2I,
\end{equation}
where \(I\) is the identity operator in the space of functions.

The operators \(\mathfrak{a}\) and \(\mathfrak{a}^{+}\) are
formally adjoint each to other: for every functions
\(x,y\in\mathscrn{S}(\mathbb{R})\) , the equality holds
\begin{equation}%
\label{FMA}%
\langle\mathfrak{a}{}x,y\rangle=\langle{}x,\mathfrak{a}^+y\rangle\,.
\end{equation}

 The equality \eqref{EVP}, applied to those
\(\kappa,\,\alpha,\,\beta\) which was found by solving the
equation \eqref{CEP}, leads to the commutational relations:
\begin{subequations}
\label{CCR}%
\begin{align}
\label{CCR1}%
\mathscr{F}\mathfrak{a}^{\dag}=\phantom{-}i\mathfrak{a}^{\dag}\mathscr{F}\,,\\
\label{CCR2}%
\mathscr{F}\mathfrak{a}^{\phantom{\dag}}=-i\mathfrak{a}^{\phantom{\dag}}\mathscr{F}\,.
\end{align}
\end{subequations}
From \eqref{CCR} it becomes clear how to construct the
differential expression which commutes (formally) with
\(\mathscr{F}\). Let
\begin{equation}%
\label{Wor}
W=(\mathfrak{a}^{\dag})^{k_1}\mathfrak{a}^{l_1}(\mathfrak{a}^{\dag})^{k_2}\mathfrak{a}^{l_2}
\cdot\,\cdots\,\cdot(\mathfrak{a}^{\dag})^{k_p}\mathfrak{a}^{l_p}
\end{equation}
be a "word" composed from the "letters"  \(\mathfrak{a}^{\dag}\)
and \(\mathfrak{a}\). From \eqref{CCR} it follows that
\begin{equation}%
\label{CrW}%
\mathscr{F}W=(-1)^{\varepsilon}W\mathscr{F},
 \end{equation}
 where
 \begin{math}\varepsilon=k_1+\,\cdots\,+k_p-l_1-\,\cdots\,-l_p\,.
 \end{math}
 Thus under the condition
 \[k_1+\,\cdots\,+k_p=l_1+\,\cdots\,+l_p\,\]
 the differential expression \(W\) defined by \eqref{Wor} commutes
 with the Fourier operator \(\mathscr{F}\). The order \(\textup{ord}\,W\)
 of the (formal) differential operator \(W\) is:
 \[\textup{ord}\,W=k_1+\,\cdots\,+k_p+l_1+\,\cdots\,+l_p\,.\]
 The least "non-trivial" value for \(\textup{ord}\,W\) is two. So,
 there exist two words \(W\) which present operators commuting
 with \(\mathscr{F}\): \(W=\mathfrak{a}\mathfrak{a}^{\dag}\)
 and \(W=\mathfrak{a}^{\dag}\mathfrak{a}\):
 \begin{subequations}
 \label{RNO}
 \begin{align}
 \label{RNO1}
\mathfrak{a}\mathfrak{a}^{\dag}&=-\frac{d^2\,\,}{dt^2}+t^2+I\,; \\
 \label{RNO12}
 \mathfrak{a}^{\dag}\mathfrak{a}&=-\frac{d^2\,\,}{dt^2}+t^2-I\,.
 \end{align}
 \end{subequations}
Therefore the differential expression
\begin{equation}%
\label{CDO}%
 L=-\frac{d^2\,\,}{dt^2}+t^2\,,
\end{equation}%
 commutes with
\(\mathscr{F}\):
\begin{equation}%
\label{OLCF}%
 \mathscr{F}L=L\mathscr{F}\,.
\end{equation}%
(We already know this even in more precise, non-formal
formulation: see Theorem 2.2). From the equalities
\begin{subequations}
 \label{RHO}
 \begin{align}
 \label{RHO1}
L=\mathfrak{a}\mathfrak{a}^{\dag}&-I\,\\
\label{RHO2} L =\mathfrak{a}^{\dag}\mathfrak{a}&+I
 \end{align}
 \end{subequations}
and from the fundamental commutational relation \eqref{FCR} it
follows that
\begin{subequations}
\label{CFCR}
 \begin{align}
\label{CFCR1}
L\mathfrak{a}^{\dag}-\mathfrak{a}^{\dag}L&=\phantom{-}2\mathfrak{a}^{\dag}\,,\\
\label{CFCR2}%
L\mathfrak{a}^{\phantom{\dag}}-\mathfrak{a}^{\phantom{\dag}}L&=-2\mathfrak{a}^{\phantom{\dag}}\,.
\end{align}
\end{subequations}
From \eqref{CFCR1} it follows that the operator
\(\mathfrak{a}^{\dag}\) is \emph{raising operator} with respect to
the operator \(L\), and from \eqref{CFCR2} it follows that the
operator \(\mathfrak{a}\) is \emph{lowering operator} with respect
to the operator \(L\). This means that if \(h\) is an eigenvector
of \(L\) corresponding to an eigenvalue \(\lambda\):%
 \begin{equation}%
 \label{EV}
 Lh=\lambda{}h,\ \ h\not=0,
 \end{equation}
then
\begin{subequations}
\label{LEV}%
\begin{equation}%
\label{LEV1}%
L(\mathfrak{a}^{\dag}h)=(\lambda+2)(\mathfrak{a}^{\dag}h)\,,
\end{equation}
and
 \begin{equation}%
\label{LEV2}%
 L(\mathfrak{a}h)=(\lambda-2)(\mathfrak{a}h)\,.
 \end{equation}
\end{subequations}
Thus,  then the vector \(\mathfrak{a}^{\dag}h\) is an eigenvector
of \(L\) corresponding the eigenvalue \(\lambda+2\) if
\(\mathfrak{a}^{\dag}h\not=0\). Analogously,
 the vector \(\mathfrak{a}h\) is an eigenvector of \(L\)
corresponding to the eigenvalue \(\lambda-2\) if
\(\mathfrak{a}h\not=0\). So, the operator \(\mathfrak{a}^{\dag}\)
increases, and the operator \(\mathfrak{a}\) decreases the
eigenvalue of the operator \(L\). Rising and lowering operators
are collectively known as \emph{ladder operators}.

From \eqref{CFCR} and \eqref{FCR} one deduces by induction that
\begin{subequations}
\label{CFCRn}
 \begin{alignat}{3}
\label{CFCRn1}
L(\mathfrak{a}^{\dag})^n&-&(\mathfrak{a}^{\dag})^nL&=
\phantom{-}2n(\mathfrak{a}^{\dag})^n\,&,\quad &n=0,\,1,\,2,\,\ldots\,,\\
\label{CFCRn2}%
L\mathfrak{a}^n\ \ &-&\mathfrak{a}^nL\ \ &=-2n\mathfrak{a}^n\,,&\
\quad &n=0,\,1,\,2,\,\ldots\,.
\end{alignat}
\end{subequations}
If \(h\) is an eigenvector of \(L\) corresponding the eigenvalue
of \(\lambda\), then
\begin{subequations}
\label{LEVn}%
\begin{equation}%
\label{LEVn1}%
L(\mathfrak{a}^{\dag})^nh=(\lambda+2n)(\mathfrak{a}^{\dag})^nh\,,
\ \quad n=0,\,1,\,2,\,\ldots\,.
\end{equation}
and
 \begin{equation}%
\label{LEVn2}%
 L\mathfrak{a}^nh=(\lambda-2n)\mathfrak{a}^nh\,,\quad
 n=0,\,1,\,2,\,\ldots\,.
 \end{equation}
\end{subequations}
\begin{lemma}
\label{GsLe}{\ } \\ %
\hspace*{2.0ex}\textup{1.}The equation \(\mathfrak{a}^{\dag}h=0\)
has no non-zero solutions
 from \(L^2(\mathbb{R})\).\\
\hspace*{2.0ex}\textup{2.}Every solution of the equation
\(\mathfrak{a}h=0\) is of the form \(h(t)=ch_0(t)\), where \(c\)
is a constant, and
\begin{equation}%
\label{GrS}%
h_0(t)=e^{-\frac{t^2}{2}}\,.
\end{equation}%
\end{lemma}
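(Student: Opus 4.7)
The plan is to read off both conclusions directly from the factored forms of the differential operators recorded in \eqref{CAO3}, thereby reducing each ODE to the elementary equation $\frac{d}{dt}\varphi(t)=0$.

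For part 1, I would start from the identity
\[
(\mathfrak{a}^{\dag}h)(t)=-e^{\frac{t^2}{2}}\frac{d}{dt}\Bigl(e^{-\frac{t^2}{2}}h(t)\Bigr),
\]
so that the equation $\mathfrak{a}^{\dag}h=0$ is equivalent to $\frac{d}{dt}\bigl(e^{-t^2/2}h(t)\bigr)=0$. Hence $e^{-t^2/2}h(t)$ must be a constant $c$, and $h(t)=c\,e^{t^2/2}$. The key observation is that for any $c\neq 0$ this function grows too fast to lie in $L^2(\mathbb{R})$, since $\int_{\mathbb{R}} e^{t^2}\,dt=\infty$. Consequently only $c=0$, i.e.\ $h\equiv 0$, is admissible among $L^2$-solutions.

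For part 2, I would use the companion identity
\[
(\mathfrak{a}h)(t)=e^{-\frac{t^2}{2}}\frac{d}{dt}\Bigl(e^{\frac{t^2}{2}}h(t)\Bigr),
\]
so that $\mathfrak{a}h=0$ becomes $\frac{d}{dt}\bigl(e^{t^2/2}h(t)\bigr)=0$. Thus $e^{t^2/2}h(t)=c$ for some constant $c$, which gives exactly $h(t)=c\,e^{-t^2/2}=c\,h_0(t)$, as claimed.

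There is no real obstacle here; the only mild subtlety is the regularity class in which one solves the ODE. Since the lemma is used in the algebraic setting where $\mathfrak{a}^{\dag},\mathfrak{a}$ act on $\mathscrn{S}(\mathbb{R})$, or at worst on locally absolutely continuous functions, the passage from $\frac{d}{dt}\varphi=0$ to $\varphi=\text{const}$ is legitimate on the connected domain $\mathbb{R}$. One should simply note that in part 1 the $L^2$ hypothesis is essential (otherwise $c\,e^{t^2/2}$ is a perfectly good classical solution), whereas in part 2 no integrability assumption is needed because $e^{-t^2/2}$ automatically belongs to $L^2(\mathbb{R})$ and to $\mathscrn{S}(\mathbb{R})$.
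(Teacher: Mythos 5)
Your proposal is correct and follows essentially the same route as the paper: both reduce each equation to the first-order linear ODE and identify its general solution, with the $L^2$ condition ruling out $c\,e^{t^2/2}$ in part 1. Your use of the factored forms \eqref{CAO3} just makes explicit the integrating factor that the paper's direct solution of $\mp\frac{dh}{dt}+th=0$ implicitly relies on.
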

\begin{proof}
 The equation \(\mathfrak{a}h=0\) is
the differential equation
\[\frac{dh}{dt}+th(t)=0\,.\]
Every solution \(h(t)\) of this equation is of the form
\(h(t)=ch_0(t)\). The equation \(\mathfrak{a}^{\dag}h=0\) is the
differential equation
\[-\frac{dh}{dt}+th(t)=0\,.\]
Every solution \(h(t)\) of this equation is of the form
\(h(t)=ce^{\frac{t^2}{2}}\), and does not belong to
\(L^2(\mathbb{R})\) if  is not identically zero.
\end{proof}
Now we are in position to do  the spectral analysis of the
operator \(L\). Let us denote
\begin{equation}%
\label{LHY}%
 h_n(t)=(\mathfrak{a}^{\dag})^nh_0(t),\quad
n=0,\,1,\,2,\,\ldots\,.
\end{equation}%
By Lemma \ref{GsLe}, \(h_n\not\equiv{}0,\ \
n=0,\,1,\,2,\,\ldots\). Using the explicit expression \eqref{CAO1}
for the operator \(\mathfrak{a}^{\dag}\), we obtain by induction
that%
\begin{subequations}
\label{HEV}
\begin{equation}%
\label{HEV1}
 h_n(t)=H_n(t)e^{-\frac{t^2}{2}},\,
\end{equation}%
 where
\begin{equation}%
\label{HEV2}
 H_n(t)=t^n+   \,\cdots
\end{equation}%
\end{subequations}
is an unitary polynomial of degree \(n\). Thus,
\[h_n(t)\in{}L^2(\mathbb{R}).\]
A little bit different representation of \(h_n\) can be obtained
from \eqref{LHY} and \eqref{CAO3}:
\begin{equation}
\label{AnRe}
h_n(t)=e^{\frac{t^2}{2}}\bigg(\frac{d\,\,}{dt}\bigg)^ke^{-t^2}\,.
\end{equation}
 From \eqref{RHO2} and
\(\mathfrak{a}h_0=0\) it follows that
\begin{equation}%
\label{SGSE}%
 Lh_0=h_0\,.
\end{equation}%
From \eqref{SGSE} and \eqref{LEVn1} it follows that for
\(n=0,\,1,\,2,\,\ldots\,,\)
\[Lh_n=\lambda_n{}h_n,\]
where
\begin{equation}%
\label{ECEV}%
 \lambda_n=2n+1,\quad n=0,\,1,\,2,\,\ldots\,.
 \end{equation}%

Thus we have constructed the sequence of eigenfunctions
\eqref{LHY}-\eqref{HEV} of the differential operator \(L\). The
operator \(L\) is defined by the differential expression
\eqref{CDO} on the set \(\mathscrn{S}(\mathbb{R})\) defined above
in Definition \ref{DefSchSp}.

The operator \(L\) defined on the set \(\mathscrn{S}(\mathbb{R})\)
is symmetric with respect to the standard scalar product
\(\langle\,.\,,\,.\,\rangle\) in \(L^2(\mathbb{R})\):
\begin{equation}%
\label{symm}%
\langle{}Lx,y\rangle=\langle{}x,L{}y\rangle\quad\forall
x,y\in\mathscrn{S}(\mathbb{R})\,.
\end{equation}%
 Every eigenfunction
\(h_n(t)\), \eqref{HEV}, belongs to the space
\(\mathscrn{S}(\mathbb{R})\), and all eigenvalues \(\lambda_n\),
\eqref{ECEV}, are pairwise different. These properties ensure that
the eigenfunctions \(h_n\) are pairwise orthogonal:
\begin{equation}%
\langle{}h_p,h_q\rangle=0,\quad\forall \ \
0\leq{}p,q<\infty,\,p\not=q\,.
\end{equation}%

\begin{lemma}%
\label{Compl}%
The system \(\lbrace{}h_n\rbrace_{0\leq{}n<\infty}\) is complete
in the space \(L^2(\mathbb{R})\),  i.e. the linear span of this
system is a set which is dense in \(L^2(\mathbb{R})\).
\end{lemma}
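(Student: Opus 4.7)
The plan is to argue by duality: to show the linear span is dense in $L^2(\mathbb{R})$ it suffices to show that every $f\in L^2(\mathbb{R})$ satisfying $\langle f,h_n\rangle=0$ for all $n\geq0$ is the zero element. Assume such an $f$ is given.

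First I would translate the orthogonality conditions into moment conditions. Since $h_n(t)=H_n(t)e^{-t^2/2}$ with $H_n$ a monic polynomial of degree $n$, the polynomials $H_0,H_1,\ldots,H_N$ form a basis of the space of polynomials of degree $\leq N$. Hence the hypothesis $\langle f,h_n\rangle=0$ for all $n$ is equivalent to the moment conditions
\begin{equation*}
\int_{-\infty}^{\infty}f(t)\,t^n\,e^{-t^2/2}\,dt=0,\qquad n=0,1,2,\ldots\,.
\end{equation*}

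Next I would introduce the auxiliary function $g(t)=f(t)e^{-t^2/2}$. By the Cauchy--Schwarz inequality, $g\in L^1(\mathbb{R})$, and moreover, for any complex $z$, the function $t\mapsto f(t)e^{-t^2/2}e^{itz}$ is integrable because $|e^{-t^2/2}e^{itz}|\leq e^{-t^2/2}e^{|t||z|}$ and this latter belongs to $L^2(\mathbb{R})$. It follows by a standard Morera/differentiation-under-the-integral argument that
\begin{equation*}
G(z):=\int_{-\infty}^{\infty}f(t)e^{-t^2/2}e^{itz}\,dt
\end{equation*}
is an entire function of $z$.

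Now I would expand $e^{itz}$ in its power series, justify interchanging sum and integral (using, e.g., dominated convergence with the majorant $|f(t)|e^{-t^2/2}e^{|t||z|}$), and conclude that the Taylor expansion of $G$ at the origin is
\begin{equation*}
G(z)=\sum_{n=0}^{\infty}\frac{(iz)^n}{n!}\int_{-\infty}^{\infty}f(t)t^n e^{-t^2/2}\,dt\,.
\end{equation*}
By the moment conditions all coefficients vanish, so $G\equiv 0$. In particular $G$ restricted to $\mathbb{R}$ is the Fourier transform of the $L^1$ function $g$, so $g=0$ almost everywhere by the injectivity of the Fourier transform, and therefore $f=0$ almost everywhere.

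The only genuinely delicate step is the analyticity of $G$ together with the interchange of summation and integration; both rest on the Gaussian decay of $e^{-t^2/2}$, which dominates any exponential $e^{|t||z|}$ at infinity. Once that is in place, the rest is routine.
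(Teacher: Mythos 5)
Your proposal is correct and follows essentially the same route as the paper: reduce orthogonality to the moment conditions against \(t^n e^{-t^2/2}\), form the entire function \(G(z)=\int f(t)e^{-t^2/2}e^{itz}\,dt\), show all its Taylor coefficients at the origin vanish so that \(G\equiv 0\), and conclude \(f=0\) from the injectivity of the Fourier transform. Your version spells out the dominated-convergence justifications a bit more explicitly, but the argument is the same.
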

\begin{proof}
From \eqref{HEV} it follows that
\[\textup{span}\,\lbrace{}h_n(t)\rbrace_{0\leq{}n<\infty}=
\textup{span}\,\lbrace{}t^ne^{\frac{t^2}{2}}\rbrace_{0\leq{}n<\infty}\,.\]
If the system \(\lbrace{}h_n\rbrace_{0\leq{}n<\infty}\) is not
complete in \(L^2(\mathbb{R})\), then the system
\(\lbrace{}t^ne^{\frac{t^2}{2}}\rbrace_{0\leq{}n<\infty}\) also is
not complete in the space \(L^2(\mathbb{R})\). Then there exists a
function \(\varphi\in{}L^2(\mathbb{R})\) which is orthogonal to
every \(t^ne^{-\frac{t^2}{2}}\),  that is
\begin{equation}%
\label{OrtCo}%
\int\limits_{-\infty}^\infty\varphi(t)\,t^ne^{-\frac{t^2}{2}}\,dt=0\
\ \forall n=0,\,1,\,2,\,\ldots\,.
\end{equation}%
 Let us consider the function
\begin{equation*}%
f(z)=\int\limits_{-\infty}^\infty\varphi(t)e^{-\frac{t^2}{2}}e^{izt}\,dt\,.
\end{equation*}%
Since  \(\varphi\in{}L^2(\mathbb{R})\), the above integral exists
for every \(z\in\mathbb{C}\) and represents a function holomorphic
in the whole complex plane \(\mathbb{C}\). The orthogonality
conditions \eqref{OrtCo} that all derivatives \(f^{(n)}\) of the
function \(f\) vanish at the origin: \(f^{(n)}(0)=0,\
n=0,\,1,\,2,\,\ldots\,\). By the uniqueness theorem for analytic
functions, \(f(z)\equiv{}0\). In particular,
 \(\int\limits_{-\infty}^\infty\varphi(t)e^{-\frac{t^2}{2}}e^{i\xi{}t}\,dt=0\)
for every \(\xi\in\mathbb{R}\). By the uniqueness theorem for the
Fourier integral, \(\varphi(t)e^{-\frac{t^2}{2}}\equiv0\), hence
\(\varphi(t)\equiv0\).
\end{proof}
A complete orthogonal system in a Hilbert space is an orthogonal
basis of this space. Thus the following result was established:
\begin{lemma}
\label{ortbas}%
The system \(\lbrace{}h_n(t)\rbrace_{\,0\leq{}n<\infty}\),
\eqref{LHY}-\eqref{HEV}, is an orthogonal basis of the space
\(L^2(\mathbb{R}).\)
\end{lemma}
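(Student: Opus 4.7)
The plan is essentially to observe that Lemma \ref{ortbas} is immediate from two facts that have already been established in the preceding pages, and so the only task is to assemble them correctly and to invoke the standard characterization of an orthogonal basis in a separable Hilbert space.

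First I would recall the orthogonality relation $\langle h_p,h_q\rangle=0$ for $p\neq q$, which was derived just before Lemma \ref{Compl} from the symmetry \eqref{symm} of $L$ on $\mathscrn{S}(\mathbb{R})$ together with the fact that the eigenvalues $\lambda_n=2n+1$ are pairwise distinct and the eigenfunctions $h_n$ all lie in $\mathscrn{S}(\mathbb{R})$. In particular, $h_n\neq 0$ for every $n$, since by \eqref{HEV} one has $h_n(t)=H_n(t)e^{-t^2/2}$ with $H_n$ a monic polynomial of degree $n$, so $\{h_n\}$ is an orthogonal system in the strict sense.

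Next I would invoke Lemma \ref{Compl}, which asserts that the linear span of $\{h_n\}_{n\geq 0}$ is dense in $L^2(\mathbb{R})$. An orthogonal system whose linear span is dense is by definition an orthogonal basis of a Hilbert space, so the conclusion follows immediately. If one wished to be slightly more concrete, one could note that for any $x\in L^2(\mathbb{R})$ the partial Fourier sums $S_N x=\sum_{n=0}^{N}\frac{\langle x,h_n\rangle}{\|h_n\|^2}h_n$ are the orthogonal projections of $x$ onto $\mathrm{span}\{h_0,\dots,h_N\}$, and density of $\bigcup_N\mathrm{span}\{h_0,\dots,h_N\}$ forces $\|x-S_Nx\|\to 0$, which is Parseval's identity.

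There is no real obstacle here: the two nontrivial inputs (orthogonality from the symmetry of $L$ and completeness from the Paley--Wiener-type argument with the entire function $f(z)$) have already been carried out, and Lemma \ref{ortbas} is just their formal combination. The only minor point to verify in writing is that none of the $h_n$ is the zero vector, but this was observed directly after \eqref{LHY} via Lemma \ref{GsLe}.
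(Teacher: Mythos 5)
Your proposal is correct and follows exactly the route the paper takes: the paper states Lemma \ref{ortbas} as an immediate consequence of the pairwise orthogonality of the $h_n$ (obtained from the symmetry of $L$ and the distinctness of the eigenvalues $\lambda_n$) together with the completeness established in Lemma \ref{Compl}, via the standard fact that a complete orthogonal system in a Hilbert space is an orthogonal basis. Your additional check that no $h_n$ vanishes is a sensible explicit inclusion of a point the paper handles via Lemma \ref{GsLe}.
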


It turns out that we have find \emph{all} eigenfunctions of \(L\).
There is no eigenfunctions essentially other then \(h_n\). To make
the formulation precise we have to define accurately the
appropriate operator focusing our attention on its domain of
definition.\\[0.0ex]
\begin{definition}
\label{DDL}%
 Domain of definition \(\mathcal{D}_{\mathscr{L}}\) of
the operator \(\mathscr{L}\) consists of functions \(x(t)\) which
are defined at every point of the real axis \(\mathbb{R}\) and
satisfy the
conditions:\\[1.5ex]
\hspace*{2.0ex}\textup{1.}\ \ %
\begin{minipage}[t]{0.94\linewidth}
 \(x\) is differentiable at every point \(t\in\mathbb{R}\), its
derivative \(\dfrac{dx(t)}{dt}\) is absolutely continuous on every
finite subinterval of \(\mathbb{R}\);
\end{minipage}\\[1.0ex]
\hspace*{2.0ex}\textup{2.}\ \ %
\begin{minipage}[t]{0.94\linewidth}
\(x(t)\in{}L^2(\mathbb{R});\)
\end{minipage}\\[1.0ex]
\begin{minipage}[t]{0.94\linewidth}
\hspace*{2.0ex}\textup{3.}\ \ %
\(-\dfrac{d^2x(t)}{dt^2}+t^2x(t)\in{}L^2(\mathbb{R});\)
\end{minipage}\\[1.5ex]
 The action of the operator
\(\mathscr{L}\) on \(x\) is defined as:
\begin{equation}%%
\label{AcHO}%
 (\mathscr{L}x)(t)=-\dfrac{d^2x(t)}{dt^2}+t^2x(t)\quad
 \textup{for}\ \ x\in\mathcal{D}_{\mathscr{L}}\,.
\end{equation}%
\end{definition}
\begin{lemma}%
\label{VBC}%
Let a function \(x(t)\) belongs to the domain of definition \(\)
of the operator \(\).  \textup{(}See \textup{Definition
\ref{DDL})}.

 Then
 \begin{equation}
 \label{VaBoCo}
 \lim_{t\to\pm\infty}x(t)=0,\quad
 \lim_{t\to\pm\infty}\frac{dx(t)}{dt}=0\,.
 \end{equation}
\end{lemma}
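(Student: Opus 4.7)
The plan is to upgrade the regularity of \(x\) in two stages---first showing \(x',\,tx \in L^2(\mathbb{R})\), then \(x'' \in L^2(\mathbb{R})\)---and then deduce both limits in \eqref{VaBoCo} from a standard Sobolev-type observation.

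For the first stage, I would introduce a smooth cutoff \(\phi_R(t) := \phi(t/R)\), where \(\phi \in C_c^{\infty}(\mathbb{R})\), \(0 \leq \phi \leq 1\), \(\phi \equiv 1\) on \([-1,1]\), vanishes off \([-2,2]\), and \(|\phi'| \leq C_0\). Because \(\phi_R\) has compact support, the integration by parts in \(\int(\mathscr{L}x)\,\overline{x}\,\phi_R^2\, dt\) is fully justified and gives
\begin{equation*}
\int_{\mathbb{R}}\!\bigl(|x'|^2 + t^2|x|^2\bigr)\phi_R^2\, dt = \operatorname{Re}\!\int_{\mathbb{R}}(\mathscr{L}x)\,\overline{x}\,\phi_R^2\, dt \;-\; 2\operatorname{Re}\!\int_{\mathbb{R}} x'\,\overline{x}\,\phi_R\phi_R'\, dt.
\end{equation*}
Cauchy--Schwarz bounds the first term on the right by \(\|\mathscr{L}x\|_{L^2}\|x\|_{L^2}\); the second is bounded by \(2(C_0/R)\|x\|_{L^2}\bigl(\int|x'|^2\phi_R^2\bigr)^{1/2}\), which weighted AM--GM absorbs into the left-hand side. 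The resulting bound, uniform in \(R\), combined with Fatou's lemma, yields \(x',\,tx \in L^2(\mathbb{R})\).

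For the second stage, an analogous cutoff argument applied to \(\int(\mathscr{L}x)\,\overline{t^2 x}\,\phi_R^2\, dt\) leads, after two integrations by parts, to an identity schematically of the form
\begin{equation*}
\int_{\mathbb{R}}\!\bigl(t^4|x|^2 + t^2|x'|^2\bigr)\phi_R^2\, dt = \operatorname{Re}\!\int_{\mathbb{R}}(\mathscr{L}x)\,\overline{t^2 x}\,\phi_R^2\, dt + \|x\phi_R\|_{L^2}^2 + E_R,
\end{equation*}
with \(E_R\) collecting the cross-terms produced by \(\phi_R'\). The potentially dangerous cross-term \(\int t^2\,x'\,\overline{x}\,\phi_R\phi_R'\, dt\) carries a factor \(t^2|\phi_R'| \leq 4 C_0 R\) on the support of \(\phi_R'\), but this is compensated by the decay estimate \(\int_{R\leq|t|\leq 2R}|x|^2\, dt \leq R^{-2}\|tx\|_{L^2}^2\) coming from Step~1, so \(E_R\) stays bounded uniformly in \(R\). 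AM--GM absorption and Fatou then give \(t^2 x \in L^2(\mathbb{R})\), and since \(\mathscr{L}x \in L^2(\mathbb{R})\), this delivers \(x'' = t^2 x - \mathscr{L}x \in L^2(\mathbb{R})\).

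With \(x,\,x',\,x'' \in L^2(\mathbb{R})\) in hand, the conclusion is routine. For any absolutely continuous \(f\) on \(\mathbb{R}\) with \(f,\,f' \in L^2(\mathbb{R})\), the identity \(|f(t)|^2 - |f(s)|^2 = 2\operatorname{Re}\!\int_s^t f'\,\overline{f}\, d\tau\) together with Cauchy--Schwarz shows that \(|f(t)|^2\) has a limit as \(|t|\to\infty\); this limit must be zero, since otherwise \(f\notin L^2\). Applying this with \(f = x\) gives the first limit in \eqref{VaBoCo}, and with \(f = x'\) the second. The main obstacle throughout is the second-stage cutoff estimate: without the intermediate fact \(tx \in L^2\), the \(t^2\phi_R\phi_R'\) cross-term cannot be tamed, so the two stages are genuinely sequential and the bookkeeping of the \(\phi_R'\)-supported remainders has to be done with care.
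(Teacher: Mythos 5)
Your argument is correct, but it takes a genuinely different route from the paper's. The paper integrates \(\big(-x''+t^2x\big)\overline{x}\) by parts over a finite interval \([a,b]\), uses the fact that this product lies in \(L^1(\mathbb{R})\) together with the monotonicity of \(\int_a^b\big(|x'|^2+t^2|x|^2\big)\,dt\) to force the boundary term \(-x'\overline{x}\,\big|_a^b\) to converge, and then argues that \(\lim_{b\to\infty}\tfrac{d|x|^2}{dt}\big|_{t=b}\) must be zero because otherwise \(x\notin L^2(\mathbb{R})\). Your two-stage cutoff scheme instead establishes the full separation property \(x',\,tx,\,t^2x,\,x''\in L^2(\mathbb{R})\) and only afterwards invokes the elementary lemma that \(f,f'\in L^2(\mathbb{R})\) with \(f\) locally absolutely continuous forces \(f(t)\to 0\). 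This costs more bookkeeping (the \(\phi_R'\)-remainders, and the genuinely sequential dependence of Stage 2 on \(tx\in L^2\) from Stage 1, which you correctly identified as the crux), but it buys two things. First, it yields the stronger structural fact that \(\mathcal{D}_{\mathscr{L}}\) lies in the intersection of the maximal domains of \(-d^2/dt^2\) and of multiplication by \(t^2\) separately. Second, and more importantly, it gives a complete proof of the second limit \(x'(t)\to 0\): the paper's argument only controls \(\mathrm{Re}\big(x'(t)\overline{x(t)}\big)\) at infinity and passes to \(\lim x'(t)=0\) without justification, whereas your route derives it honestly from \(x',x''\in L^2\). Note that your Stage 1 alone already recovers everything the paper's computation actually extracts (namely \(x',\,tx\in L^2\) and hence \(x(t)\to 0\)); Stage 2 is precisely the extra work needed to make the claim about \(x'\) rigorous.
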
%
\begin{proof} The function %
\(\bigg(\!\!-\dfrac{d^2x(t)}{dt^2}+t^2x(t)\bigg)\cdot\overline{x(t)}\)
belongs to \(L^1(\mathbb{R})\) since each of the factors of this
product belongs to \(L^2(\mathbb{R})\). Therefore, there exists
the finite limit
\begin{equation*}%
\lim_{\substack{a\to-\infty\\b\to+\infty}}\int\limits_{a}^{b}
\bigg(\!\!-\dfrac{d^2x(t)}{dt^2}+t^2x(t)\bigg)\,\overline{x(t)}\,dt\,.
\end{equation*}
Integrating by parts, we obtain
\begin{multline*}%
\int\limits_{a}^{b}
\bigg(\!\!-\dfrac{d^2x(t)}{dt^2}+t^2x(t)\bigg)\,\overline{x(t)}\,dt=\\
-\frac{dx(t)}{dt}\,\overline{x(t)}\,\bigg|_{t=a}^{t=b}+
\int\limits_{a}^{b}\bigg(\bigg|\frac{dx(t)}{dt}\bigg|^2+t^2\big|x(t)\big|^2\bigg)\,dt\,.
\end{multline*}
For fixed \(a\), there exists the limit %
\(\lim_{b\to\infty}\int\limits_{a}^{b}%
\bigg(\bigg|\dfrac{dx(t)}{dt}\bigg|^2+t^2\big|x(t)\big|^2\bigg)\,dt\),
finite or infinite. Hence, there exists the limit
\[\lim_{b\to+\infty}\frac{d|x(t)|^2}{dt}\,_{\big|_{t=b}}=
\lim_{b\to+\infty}\bigg(\frac{dx(t)}{dt}\,\overline{x(t)}+
\overline{\frac{dx(t)}{dt}}\,x(t)\bigg)_{\big|_{t=b}}\,,\] finite
or infinite. The last limit must be equal to zero, otherwise
\(\lim_{t\to\infty}|x(t)|=+\infty\), and the condition
\(x\in{}L^2(\mathbb{R})\) shall be violated. Since %
\(\lim_{t\to\infty}\frac{dx(t)}{dt}=0\) and
\(x\in{}L^2(\mathbb{R})\), the condition
\(\lim_{t\to\infty}x(t)=0\) will be satisfied.
\end{proof}
\begin{lemma}
\label{SymmOpe}%
 The operator \(\mathscr{L}\) is a symmetric
operator in \(L^2(\mathbb{R})\), that is
\begin{equation}%
\label{SymmOp}%
\langle\mathscr{L}x,y\rangle=\langle{}x,\mathscr{L}y\rangle \quad
\ \forall\,x,\,y\in\mathcal{D}_{\mathscr{L}}\,.
\end{equation}%
\end{lemma}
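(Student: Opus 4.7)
The plan is to verify symmetry by reducing \(\langle\mathscr{L}x,y\rangle-\langle{}x,\mathscr{L}y\rangle\) to a boundary term via two integration by parts, and then to kill that boundary term using Lemma \ref{VBC}.

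First I would work on a finite interval \([a,b]\). For \(x,y\in\mathcal{D}_{\mathscr{L}}\) the functions \(x\), \(x'\), \(y\), \(y'\) are absolutely continuous on \([a,b]\), so the formulas
\[
\int_a^b(-x''(t))\overline{y(t)}\,dt
=\bigl[-x'(t)\overline{y(t)}\bigr]_{a}^{b}
+\int_a^b x'(t)\overline{y'(t)}\,dt
\]
and
\[
\int_a^b x'(t)\overline{y'(t)}\,dt
=\bigl[x(t)\overline{y'(t)}\bigr]_{a}^{b}
-\int_a^b x(t)\overline{y''(t)}\,dt
\]
are legal. Combining them and observing that the multiplication-by-\(t^2\) contribution is trivially symmetric because \(t^2\) is real, I obtain the key identity
\[
\int_a^b(\mathscr{L}x)(t)\,\overline{y(t)}\,dt
-\int_a^b x(t)\,\overline{(\mathscr{L}y)(t)}\,dt
=\bigl[-x'(t)\overline{y(t)}+x(t)\overline{y'(t)}\bigr]_{a}^{b}.
\]

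Next I would pass to the limit \(a\to-\infty\), \(b\to+\infty\). Because \(x,y\in{}L^2(\mathbb{R})\) and \(\mathscr{L}x,\mathscr{L}y\in{}L^2(\mathbb{R})\), both integrands on the left are in \(L^1(\mathbb{R})\) by the Cauchy--Schwarz inequality, so the two integrals converge to \(\langle\mathscr{L}x,y\rangle\) and \(\langle{}x,\mathscr{L}y\rangle\) respectively. The content of the lemma is therefore entirely concentrated in showing that the right-hand side tends to zero.

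This last point is where Lemma \ref{VBC} enters, and it is really the only nontrivial ingredient: it guarantees that \(x(t)\to0\), \(x'(t)\to0\), \(y(t)\to0\), \(y'(t)\to0\) as \(t\to\pm\infty\). Consequently each of \(x'(b)\overline{y(b)}\), \(x(b)\overline{y'(b)}\), and their analogues at \(a\), tends to \(0\), so the bracketed boundary term vanishes in the limit. This yields \(\langle\mathscr{L}x,y\rangle=\langle{}x,\mathscr{L}y\rangle\) and completes the proof. The main (and essentially the only) obstacle is the control of the boundary terms at infinity, which has already been handled in Lemma \ref{VBC}; everything else is routine two-fold integration by parts.
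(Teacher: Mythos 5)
Your proof is correct and follows exactly the paper's argument: integrate by parts twice over a finite interval, note that the $t^2$ term is trivially symmetric, pass to the limit using the $L^1$ integrability of the products, and kill the boundary terms with Lemma \ref{VBC}. The paper's own proof is just a one-sentence sketch of this same scheme, so your write-up is a faithful (and more detailed) rendering of it.
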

\begin{proof} The equality \eqref{SymmOp} is obtained
integrating by part twice on fixed finite interval and then
passing to the limit as the endpoints of this interval tend to
\(\mp\infty\). The term out the integral vanish according to Lemma
\ref{VBC}.
\end{proof}
\begin{lemma}%
\label{MEV}%
If a number \(\lambda\) is an eigenvalue of the operator
\(\mathscr{L}\), then the multiplicity of \(\lambda\) is equal to
one.
\end{lemma}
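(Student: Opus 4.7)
The plan is to exploit the fact that the eigenvalue equation $\mathscr{L}x=\lambda x$, written explicitly as $-x''(t)+t^2x(t)=\lambda x(t)$, is a linear second-order ordinary differential equation on $\mathbb{R}$. Its space of classical solutions is therefore two-dimensional, so a priori there could be up to two linearly independent eigenfunctions in $\mathcal{D}_{\mathscr{L}}$. I will show that the $L^2$-condition built into $\mathcal{D}_{\mathscr{L}}$ rules out two linearly independent solutions, and the key tool is the Wronskian together with Lemma \ref{VBC}.

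First I would suppose, for contradiction, that $x_1, x_2 \in \mathcal{D}_{\mathscr{L}}$ are two linearly independent eigenfunctions of $\mathscr{L}$ corresponding to the same eigenvalue $\lambda$. Both satisfy $-x_j''+t^2x_j=\lambda x_j$ pointwise (using the absolute continuity of $x_j'$ guaranteed by item 1 of Definition \ref{DDL}). Next I would form the Wronskian
\begin{equation*}
W(t)=x_1(t)\,x_2'(t)-x_1'(t)\,x_2(t).
\end{equation*}
Because the ODE has no first-order term, a direct computation gives $W'(t)=x_1(t)x_2''(t)-x_1''(t)x_2(t)=x_1(t)\bigl(t^2-\lambda\bigr)x_2(t)-\bigl(t^2-\lambda\bigr)x_1(t)\,x_2(t)=0$, so $W$ is constant on $\mathbb{R}$.

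Now I would invoke Lemma \ref{VBC}: since $x_1,x_2\in\mathcal{D}_{\mathscr{L}}$, both $x_j(t)$ and $x_j'(t)$ tend to zero as $t\to\pm\infty$. Consequently $W(t)\to 0$ as $t\to+\infty$, and since $W$ is constant, $W(t)\equiv 0$ on all of $\mathbb{R}$. A vanishing Wronskian for two solutions of a second-order linear ODE implies linear dependence, contradicting our assumption. Therefore the eigenspace of $\lambda$ in $\mathcal{D}_{\mathscr{L}}$ has dimension at most one, which is the desired claim.

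There is no real obstacle here; the only delicate point is justifying that $W$ is genuinely constant, which hinges on the absence of a first-order term in $\mathscr{L}$ and on the regularity afforded by item 1 of Definition \ref{DDL}. The passage from "constant" to "zero" is then immediate from Lemma \ref{VBC}.
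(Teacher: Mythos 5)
Your proof is correct and follows essentially the same route as the paper: both arguments form the Wronskian of two eigenfunctions for the same $\lambda$, observe that it is constant because the equation $-x''+t^2x=\lambda x$ has no first-order term, and then use Lemma \ref{VBC} to force the constant to be zero, whence the two eigenfunctions are proportional. The only cosmetic difference is that you phrase it as a proof by contradiction while the paper concludes proportionality directly.
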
%
\begin{proof}
Let \(u(t)\) and \(v(t)\) are eigenfunction of \(\mathscr{L}\)
corresponding to the same eigenvalue \(\lambda\):
\begin{equation*}%
u\in\mathcal{D}_{\mathscr{L}},\ \ v\in\mathcal{D}_{\mathscr{L}},
\mathscr{L}u=\lambda{}u,\ \mathscr{L}v=\lambda{}v\,.
\end{equation*}%
 Since both functions, \(u(t)\) and \(v(t)\) are solutions of the
 same differential equation of the form
 \[-\frac{d^2x(t)}{dt^2}+t^2x(t)-\lambda{}x(t)=0\,,\]
the Wronskian \(W(t)=\dfrac{du(t)}{dt}v(t)-u(t)\dfrac{dv(t)}{dt}\)
does not depend on \(t\). According to Lemma \ref{VBC}, \(
\lim_{t\to\pm\infty}W(t)=0\). Hence \(W(t)\equiv0\), and the
functions \(u(t)\) and \(v(t)\) are proportional.
\end{proof}
\begin{lemma}
\label{AllEF}%
Let \(h(t)\not\equiv0\) be an eigenfunction of differential operators %
\(\mathscr{L}\),\textup{(}See \textup{Definition \ref{DDL})},
corresponding an eigenvalue \(\lambda\):
\[h\in\mathcal{D}_{\mathscr{L}},\ %
\mathscr{L}=\lambda{}h.\]

Then \(\lambda\) coincides with one of the eigenvalues
 \eqref{ECEV}: \(\lambda=\lambda_{n_0}\) for some \(n_0\),
 and the function \(h(t)\) is
proportional to the appropriate eigenfunction:
\(h(t)=ch_{n_0}(t)\), where \(c\) is a constant.
\end{lemma}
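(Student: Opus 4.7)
The plan is to combine the three ingredients already established: (i) $\mathscr{L}$ is symmetric on $\mathcal{D}_{\mathscr{L}}$ (Lemma \ref{SymmOpe}); (ii) the Hermite functions $h_n$ lie in the Schwartz space $\mathscrn{S}(\mathbb{R})\subset\mathcal{D}_{\mathscr{L}}$, are mutually orthogonal eigenfunctions with eigenvalues $\lambda_n=2n+1$, and form an orthogonal basis of $L^2(\mathbb{R})$ (Lemma \ref{ortbas}); (iii) every eigenvalue of $\mathscr{L}$ has multiplicity one (Lemma \ref{MEV}). The strategy is: first show that $\lambda$ must belong to the list $\{\lambda_n\}$, then invoke Lemma \ref{MEV}.

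First I would observe that since $h\in\mathcal{D}_{\mathscr{L}}$ and $h_n\in\mathcal{D}_{\mathscr{L}}$, the symmetry identity \eqref{SymmOp} is applicable to the pair $(h,h_n)$. Writing
\[
\lambda\,\langle h,h_n\rangle=\langle\mathscr{L}h,h_n\rangle=\langle h,\mathscr{L}h_n\rangle=\lambda_n\,\langle h,h_n\rangle,
\]
one concludes that $(\lambda-\lambda_n)\langle h,h_n\rangle=0$ for every $n\geq 0$. (En route, the same symmetry argument applied to the pair $(h,h)$ shows $\lambda\in\mathbb{R}$, which is what permits the comparison with the real numbers $\lambda_n$.)

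Next I would argue by contradiction: suppose $\lambda\neq\lambda_n$ for all $n=0,1,2,\ldots$. Then $\langle h,h_n\rangle=0$ for every $n$, so $h$ is orthogonal to every element of the orthogonal basis $\{h_n\}_{n\geq 0}$ of $L^2(\mathbb{R})$ supplied by Lemma \ref{ortbas}. By completeness this forces $h\equiv 0$, contradicting the assumption $h\not\equiv 0$. Hence there exists some index $n_0$ with $\lambda=\lambda_{n_0}$.

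Finally, once the eigenvalue has been identified, both $h$ and $h_{n_0}$ are eigenfunctions of $\mathscr{L}$ corresponding to the same eigenvalue $\lambda_{n_0}$, so Lemma \ref{MEV} yields $h(t)=c\,h_{n_0}(t)$ for some constant $c$, completing the proof. I do not anticipate any serious obstacle: all the analytic content (symmetry on $\mathcal{D}_{\mathscr{L}}$, completeness of the Hermite system, and the Wronskian argument for simplicity of the spectrum) has already been carried out; the present lemma is essentially the bookkeeping step that ties them together.
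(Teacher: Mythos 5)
Your proof is correct and follows essentially the same route as the paper: symmetry of $\mathscr{L}$ gives $(\lambda-\lambda_n)\langle h,h_n\rangle=0$, completeness of the Hermite system forces $\lambda=\lambda_{n_0}$, and the multiplicity-one lemma gives proportionality. Your added remark that symmetry forces $\lambda\in\mathbb{R}$ is a small but welcome tightening, and you correctly cite Lemma \ref{MEV} where the paper mistakenly points to Lemma \ref{VBC}.
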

\begin{proof}
Combining the equalities \[Lh_n=\lambda_nh_n, \ \ Lh=\lambda{}h,\
\ \langle{}Lh_n,h\rangle=\langle{}h_n,Lh\rangle ,\] we obtain that
\[(\lambda_n-\lambda)\langle{}h_n,h\rangle=0\,.\]
If \(\lambda\) differs from any of eigenvalues \(\lambda_n\), then
\(\langle{}h_n,h\rangle=0\ \forall\,n\). Since the system
\(\lbrace{}h_n\rbrace_{0\leq{}n<\infty}\) is complete in
\(L^2(\mathbb{R})\), and \(h\not=0\), this is impossible. Thus,
\(\lambda=\lambda_{n_0}\) for some \(n_0\). By Lemma \ref{VBC},
the eigenvalue \(\lambda_{n_0}\) is of multiplicity one. Thus, the
eigenfunction \(h\) is proportional to the eigenfunction
\(h_{n_0}\).
\end{proof}
 We already know, \eqref{OLCF}, see also \textbf{Theorem 2.2}, that the differential operator
 \(\mathscr{L}\) commutes with the operator \(\mathscr{F}\):
 \begin{equation}
 \label{RCR}
\mathscr{L}\mathscr{F}x= \mathscr{F}\mathscr{L}x\quad
 \forall\,\,x\in\mathscrn{S}(\mathbb{R})\,.
 \end{equation}
 The eigenfunctions \(h_n\), \eqref{LHY}-\eqref{HEV}, belongs the
 Schwartz space \(\mathscrn{S}(\mathbb{R})\). Thus, the equality
 \eqref{RCR} is applicable to \(x=h_n\):
 \begin{equation}%
 \label{CRFE}%
 \mathscr{L}\mathscr{F}h_n=\mathscr{F}\mathscr{L}h_n\,.
 \end{equation}%
Since \(h_n\) is an eigenvector of \(\mathscr{L}\) corresponding
to the eigenvalue \(\lambda_n\):
\[\mathscr{L}h_n=\lambda_nh_n\,,\]
we see from \eqref{CRFE}, that
\begin{equation}%
 \label{CRFEF}%
 \mathscr{L}(\mathscr{F}h_n)=\lambda_n(\mathscr{F}h_n)\,.
 \end{equation}%
 The last equality means that the vector \(\mathscr{F}h_n\) also
 is an eigenvector of \(\mathscr{L}\) corresponding
to the eigenvalue \(\lambda_n\). Since eigenvalues of the operator
\(\mathscr{L}\) are of multiplicity one, (Lemma \ref{MEV}), The
vector \(\mathscr{F}h_n\) is proportional  to the vector \(h_n\):
\begin{equation}
\label{EVFP}%
\mathscr{F}h_n=\mu_nh_n\,.
\end{equation}
However the operator \(\mathscr{F}\) is unitary,
\begin{equation}
\label{UnF}%
 \mathscr{F}^{\ast}\mathscr{F}=I\,,
\end{equation}%
and
\begin{equation}
\label{SqF}%
 \mathscr{F}^2=-I.
\end{equation}
The last equality is a consequence of the unitarity of
\(\mathscr{F}\), \eqref{UnF}, and the relation
\begin{equation*}%
%\label{InvT}%
 (\mathscr{F}^{\ast}x)(t)=(\mathscr{F}x)(-t)\quad \forall\,x.
\end{equation*}%
From \eqref{SqF}, the spectral mapping theorem and unitarity of
\(\mathscr{F}\) it follows that the eigenvalues of \(\mathscr{F}\)
are among the numbers
\(\lambda=1,\,\lambda=-1,\,\lambda=i,\,\lambda=-1\), and that the
space \(L^2(\mathbb{R})\) is the orthogonal sum of the appropriate
eigenspaces:
\begin{subequations}
\label{Osum}
\begin{gather}
\label{Osum1}
L^2(\mathbb{R})=\mathcal{X}_{\,1}\oplus\mathcal{X}_{\,-1}\oplus%
\mathcal{X}_{\,i}\oplus\mathcal{X}_{\,-i}\,,\\
\label{Osum2}%
 \mathscr{F}x=\lambda{}x\ \ \textup{for} \ \
x\in\mathcal{X}_{\,\lambda},\ \ \lambda=1,\,-1,\,i,\,-i\,.
\end{gather}
Thus, in \eqref{EVFP} \(\mu_n\) may take only one of four possible
values: \(1\), \(-1\), \(i\), \(-i\).
\end{subequations}
For \(n=0\), the equality means
\[\frac{1}{\sqrt{2\pi}}\int\limits_{-\infty}^{\infty}%
e^{-\frac{\xi^2}{2}}e^{i\xi{}t}\,d\xi=\mu_0\,e^{-\frac{t^2}{2}}\,.\]
Setting \(t=0\) in the last equality, we obtain
\[\mu_0=\frac{1}{\sqrt{2\pi}}\int\limits_{-\infty}^{\infty}\,%
e^{-\frac{\xi^2}{2}}\,d\xi\] In particular, \(\mu_0>0\). Hence,
\begin{equation}
\mu_0=1\,.
\end{equation}
From \eqref{CCR1} we obtain that
\begin{equation}%
\label{nEV}%
\mathscr{F}(\mathfrak{a}^{\dag})^nx=i^n\,%
\mathscr{F}(\mathfrak{a}^{\dag})^nx\quad\forall
x\in\mathscrn{S}(\mathbb{R})\,.
\end{equation}%%
 (The last equality is a particular case of the equality \eqref{Wor}-%
 \eqref{CrW}.) Substituting \(x=h_0\) into \eqref{nEV} and taking
 into account \eqref{LHY}, we see that
\begin{subequations}
\label{nEV0}
 \begin{equation}%
 \label{nEV1}
 \mathscr{F}h_n=i^nh_n\,,\quad n=0,\,1,\,2,\,\ldots\,\,.
 \end{equation}%
 So,
 \begin{equation}%
\label{nEV2}%
 \mu_n=i^n,\quad n=0,\,1,\,2,\,\ldots\,\,.
 \end{equation}%
\end{subequations}
Thus, we obtain the following result:
\begin{theorem}\label{BCES}{\ }\\%
\hspace*{2.0ex}\textup{1.}
\begin{minipage}[t]{0.94\linewidth}
The spectrum ow the Fourier operator \( \mathscr{F}\), considered
in the Hilbert space \(L^2(\mathbb{R})\), consists of four points:
\[\mu=1,\ \mu=-1,\ \mu=i,\ \mu=-i\,.\]
Each of this points is an eigenvalue of  infinite multiplicity.
\end{minipage}\\[1.5ex]%
\hspace*{2.0ex}\textup{2.}
\begin{minipage}[t]{0.94\linewidth}
The space \(L^2(\mathbb{R})\) is the orthogonal sum \eqref{Osum}
of the appropriate eigenspaces \(\mathcal{X}_\lambda\),
\(\lambda=1,\, i,\,-1,\,-i\):
\begin{gather*}
L^2(\mathbb{R})=\mathcal{X}_{\,1}\oplus\mathcal{X}_{\,-1}\oplus%
\mathcal{X}_{\,i}\oplus\mathcal{X}_{\,-i}\,,\\
\mathscr{F}x=\lambda{}x\ \ \textup{for} \ \
x\in\mathcal{X}_{\,\lambda},\ \ \lambda=1,\,-1,\,i,\,-i\,.
\end{gather*}
Each of these eigenspaces \(\mathcal{X}_{\,\lambda}\) is
infinite-dimensional.
\end{minipage}\\[1.5ex]%
\hspace*{2.0ex}\textup{3.}
\begin{minipage}[t]{0.94\linewidth}
The systems
 \begin{align}
 \lbrace{}h_{4k\phantom{+2}}\rbrace_{k=0,\,1,\,2,\,\ldots\,},\  &
\lbrace{}h_{4k+1}\rbrace_{k=0,\,1,\,2,\,\ldots\,}, \notag \\ %
\lbrace{}h_{4k+2}\rbrace_{k=0,\,1,\,2,\,\ldots\,},\ & %
\lbrace{}h_{4k+3}\rbrace_{k=0,\,1,\,2,\,\ldots\,},\
 \end{align}
form orthogonal bases in the eigenspace \(\mathcal{X}_{1}\),
\(\mathcal{X}_{i}\), \(\mathcal{X}_{-1}\), \(\mathcal{X}_{-i}\)
respectively.
\end{minipage}\\[2.0ex]
\end{theorem}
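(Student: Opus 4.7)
The plan is to bootstrap the result entirely from what has already been established: the identity $\mathscr{F}h_n=i^n h_n$ from \eqref{nEV1} together with Lemma~\ref{ortbas} saying that $\{h_n\}_{n\geq 0}$ is an orthogonal basis of $L^2(\mathbb{R})$. The structural input is that the map $n\mapsto i^n$ takes only four values and groups $\mathbb{N}$ into the four residue classes modulo $4$, which will yield the four eigenspaces.

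First I would define $\mathcal{X}_\lambda$ for $\lambda\in\{1,i,-1,-i\}$ to be the closed linear span of $\{h_n:i^n=\lambda\}$, that is, the closed spans of $\{h_{4k}\}$, $\{h_{4k+1}\}$, $\{h_{4k+2}\}$, $\{h_{4k+3}\}$ respectively. Since these four subsequences are pairwise disjoint parts of an orthogonal basis, the spaces $\mathcal{X}_\lambda$ are pairwise orthogonal, and their algebraic sum contains every finite linear combination of the $h_n$; by Lemma~\ref{ortbas} this sum is dense, hence $L^2(\mathbb{R})=\mathcal{X}_1\oplus\mathcal{X}_i\oplus\mathcal{X}_{-1}\oplus\mathcal{X}_{-i}$. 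Because $\mathscr{F}$ is bounded (unitary) and acts as the scalar $\lambda$ on the dense spanning set of $\mathcal{X}_\lambda$, it acts as $\lambda I$ on all of $\mathcal{X}_\lambda$. Each $\mathcal{X}_\lambda$ is visibly infinite-dimensional since each residue class contains infinitely many $h_n$.

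Next I would show that $\mathcal{X}_\lambda$ is the full eigenspace of $\mathscr{F}$ at $\lambda$ and that no other $\mu\in\mathbb{C}$ lies in the spectrum. For the first point: if $x=\sum c_n h_n$ with $c_n=\langle x,h_n\rangle/\|h_n\|^2$ and $\mathscr{F}x=\mu x$, then continuity of $\mathscr{F}$ gives $\sum c_n i^n h_n=\mu\sum c_n h_n$, and the orthogonality of $\{h_n\}$ yields $c_n(i^n-\mu)=0$ for every $n$. If $\mu\notin\{1,i,-1,-i\}$ this forces $c_n=0$ for all $n$, so $x=0$; if $\mu=\lambda\in\{1,i,-1,-i\}$ then $c_n=0$ unless $i^n=\lambda$, so $x\in\mathcal{X}_\lambda$. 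This proves both the identification of eigenspaces and that the point spectrum is exactly $\{1,i,-1,-i\}$. For the spectrum proper, note that on each $\mathcal{X}_\lambda$ the operator $\mathscr{F}-\mu I$ equals $(\lambda-\mu)I$, so if $\mu\notin\{1,i,-1,-i\}$ the operator $\mathscr{F}-\mu I$ has a bounded inverse acting as $(\lambda-\mu)^{-1}I$ on each summand, with norm $\max_\lambda|\lambda-\mu|^{-1}<\infty$ because there are only four summands; therefore $\mu$ is resolvent.

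The final claim about orthogonal bases of the four eigenspaces is then automatic: by construction $\{h_{4k+j}\}_{k\geq 0}$ is an orthogonal system in $\mathcal{X}_{i^j}$, and its closed linear span is $\mathcal{X}_{i^j}$ by definition. I do not anticipate a real obstacle here — all the analytic heavy lifting (completeness of Hermite functions, the eigenvalue computation $\mathscr{F}h_n=i^n h_n$, unitarity of $\mathscr{F}$) has already been carried out earlier in the section, so the theorem is a matter of correctly packaging these facts; the only mild subtlety is the continuity argument used to promote the pointwise eigenrelation on basis vectors to the statement $\mathscr{F}|_{\mathcal{X}_\lambda}=\lambda I$.
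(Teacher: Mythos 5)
Your proposal is correct and follows essentially the same route as the paper: everything is read off from the eigenrelation $\mathscr{F}h_n=i^nh_n$ (the paper's \eqref{nEV1}) together with the completeness of the Hermite system (Lemma \ref{ortbas}), and indeed the paper presents the theorem as an immediate consequence of exactly these two facts. The one small divergence is how the spectrum is confined to $\{1,-1,i,-i\}$: the paper invokes the relation $\mathscr{F}^2x(t)=x(-t)$, unitarity and the spectral mapping theorem to get the a priori decomposition \eqref{Osum}, whereas you obtain the same conclusion directly from the explicit diagonalization via a resolvent bound on each summand --- a self-contained packaging that makes the paper's implicit ``thus we obtain'' step fully explicit.
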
%

The system \(\lbrace{}h_{n}\rbrace_{n=0,\,1,\,2,\,\ldots\,}\) is
not normalized. Let us normalize this system. By induction with
respect to \(n\) we derive from \eqref{FCR} and \eqref{LHY} that
\begin{equation}
\label{Bac}%
 \mathfrak{a}h_n=2nh_{n-1},\  n=1,\,2,\,\ldots\,;\  \ %
\mathfrak{a}h_0=0\,.
\end{equation}
From \eqref{LHY}, \eqref{FMA} and \eqref{Bac},
\[\langle{}h_n,h_n\rangle=\langle\mathfrak{a}^{\dag}h_{n-1},h_n\rangle=
\langle{}h_{n-1},\mathfrak{a}h_n\rangle=2n
\langle{}h_{n-1},h_{n-1}\rangle\,.\] Iterating the equality
\(\langle{}h_n,h_n\rangle=2n\langle{}h_{n-1},h_{n-1}\rangle\),
 we obtain
\[\langle{}h_n,h_n\rangle=2^nn!\langle{}h_0,h_0\rangle\,.\]
By direct calculation, \(\langle{}h_n,h_n\rangle=\pi\). Thus,
\begin{equation}
\label{norm} %
\langle{}h_n,h_n\rangle=\pi2^nn!\,,\quad
n=0,\,1,\,2,\,\ldots\,.
\end{equation}
Let us introduce the normalized vectors
\begin{equation}
\label{normv} %
e_n=\pi^{-1/2}2^{-\frac{n}{2}}(n!)^{-1/2}h_n\,,\quad
n=0,\,1,\,2,\,\ldots\,.
\end{equation}
The system \( \lbrace{}e_{n}\rbrace_{n=0,\,1,\,2,\,\ldots\,}\)
forms an \emph{orthonormal} basis of the space
\(L^2(\mathbb{R})\), and
\begin{equation}%
\label{eigve}%
\mathscr{L}e_n=\lambda_ne_n, \ \ \ %
\mathscr{F}e_n=\mu_ne_n, \ \ \ n=0,\,1,\,2,\,\ldots\,,
\end{equation}%
where
\begin{equation}%
\label{eigva}%
 \lambda_n=2n+1,\quad \mu_n=i^{n}\,.
\end{equation}

The domain of definition \(\mathcal{D}_{\mathscr{L}}\) of the
operator \(\mathscr{L}\) can be described as follows:
\begin{lemma}
\label{DDO}%
 A vector \(x\) from \(L^2(\mathbb{R})\) belongs to the domain of
 definition \(\mathcal{D}_{\mathscr{L}}\) of the operator
 \(\mathscr{L}\) which was introduced in \textup{Definition \ref{DDL}} if
 and only if the coefficients \(c_n\) of the expansion of \(x\)
 in the Fourier series
 \begin{equation*}%
 %\label{Expx}
 x=\sum\limits_{0\leq{}n<\infty}c_ne_n
 \end{equation*}%
 with respect the orthonormal basis  \( \lbrace{}e_{n}\rbrace_{n=0,\,1,\,2,\,\ldots\,}\)
 satisfy the condition
 \[\sum\limits_{0\leq{}n<\infty}\lambda_n^2|c_n|^2<\infty\,.\]
 The vector \(\mathscr{L}x\) is represented by the ortogonal
 series
 \[\mathscr{L}x=\sum\limits_{0\leq{}n<\infty}\lambda_nc_ne_n\,.\]
\end{lemma}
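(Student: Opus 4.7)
The plan is to prove the characterization by establishing the two implications separately, using the symmetry of $\mathscr{L}$ (Lemma \ref{SymmOpe}), the orthonormal basis property of $\{e_n\}$, and standard one-dimensional regularity for the distributional equation $-x''+t^2x=y$ to recover the pointwise conditions in Definition \ref{DDL}.

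For the necessity direction, suppose $x\in\mathcal{D}_{\mathscr{L}}$. Since each $e_n$ lies in $\mathscrn{S}(\mathbb{R})\subseteq\mathcal{D}_{\mathscr{L}}$ and $\mathscr{L}e_n=\lambda_n e_n$ by \eqref{eigve}, the symmetry identity \eqref{SymmOp} yields
\[
\langle\mathscr{L}x,e_n\rangle \;=\; \langle x,\mathscr{L}e_n\rangle \;=\; \lambda_n c_n.
\]
Since $\mathscr{L}x\in L^2(\mathbb{R})$, Parseval's identity for the orthonormal basis $\{e_n\}$ then delivers both $\sum_n\lambda_n^2|c_n|^2 = \|\mathscr{L}x\|^2<\infty$ and the expansion $\mathscr{L}x = \sum_n\lambda_n c_n e_n$.

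For the sufficiency direction, suppose $\sum_n\lambda_n^2|c_n|^2<\infty$, and set $y:=\sum_n\lambda_n c_n e_n\in L^2(\mathbb{R})$. I would approximate $x$ by the partial sums $x_N:=\sum_{n=0}^N c_n e_n$, each of which lies in $\mathscrn{S}(\mathbb{R})\subseteq\mathcal{D}_{\mathscr{L}}$ and satisfies $\mathscr{L}x_N=\sum_{n=0}^N\lambda_n c_n e_n$. By construction, $x_N\to x$ and $\mathscr{L}x_N\to y$ in $L^2(\mathbb{R})$. For any test function $\varphi\in\mathscrn{S}(\mathbb{R})$, the symmetry of $\mathscr{L}$ on $\mathcal{D}_{\mathscr{L}}$ gives $\langle\mathscr{L}x_N,\varphi\rangle = \langle x_N,\mathscr{L}\varphi\rangle$, and passing to the $L^2$ limit yields $\langle y,\varphi\rangle = \langle x,\mathscr{L}\varphi\rangle$. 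In other words, the identity $-x''+t^2 x = y$ holds in the sense of (tempered) distributions on $\mathbb{R}$.

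The main obstacle is to promote this distributional identity into the pointwise regularity demanded by Definition \ref{DDL}. Because $x\in L^2(\mathbb{R})$ and $t^2$ is locally bounded, $t^2 x$ belongs to $L^2_{\mathrm{loc}}(\mathbb{R})$, so the distributional second derivative $x''=t^2 x - y$ lies in $L^2_{\mathrm{loc}}(\mathbb{R})$ as well. Integrating twice against an indefinite integral produces a representative of $x$ that is continuously differentiable on all of $\mathbb{R}$ with $x'$ absolutely continuous on every finite subinterval, exactly matching conditions (1)--(3) of Definition \ref{DDL}. Since $-x''+t^2 x$ agrees almost everywhere with $y\in L^2(\mathbb{R})$, we conclude $x\in\mathcal{D}_{\mathscr{L}}$ and $\mathscr{L}x = y = \sum_n\lambda_n c_n e_n$, completing the proof.
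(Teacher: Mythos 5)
Your argument is correct and complete in both directions. The paper itself states Lemma \ref{DDO} without any proof, so there is no argument in the text to compare yours against; what you have supplied is the standard one, and it fills a genuine gap in the exposition. The necessity half is the routine Parseval computation, and you correctly identify that its only nontrivial input is the symmetry of $\mathscr{L}$ on the \emph{maximal} domain of Definition \ref{DDL} (Lemma \ref{SymmOpe}, which in turn rests on the boundary behaviour of Lemma \ref{VBC}) --- without that, $\langle\mathscr{L}x,e_n\rangle=\lambda_n c_n$ would not follow. The sufficiency half is also handled properly: passing the identity $\langle\mathscr{L}x_N,\varphi\rangle=\langle x_N,\mathscr{L}\varphi\rangle$ to the limit gives $-x''+t^2x=y$ distributionally, and since $t^2x-y\in L^1_{\mathrm{loc}}$, the double indefinite integral argument shows $x$ differs from a $C^1$ function with absolutely continuous derivative by an affine function, which recovers conditions 1--3 of Definition \ref{DDL} exactly. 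Conceptually, your two halves together prove that the maximal operator of Definition \ref{DDL} coincides with the self-adjoint operator $\sum_n\lambda_n\langle\cdot,e_n\rangle e_n$, i.e.\ that $\mathscr{L}$ is essentially self-adjoint on $\mathscrn{S}(\mathbb{R})$ and its closure already exhausts the maximal domain; this is the cleanest way to see why the lemma is true, and nothing in your write-up needs repair.
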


If %
\begin{math}%
x=\sum\limits_{0\leq{}n<\infty}c_n(x)e_n
\quad\textup{and}\quad
\mathscr{F}x\sum\limits_{0\leq{}n<\infty}c_n(\mathscr{F}x)e_n
\end{math}%
are the Fourier expansions of the vectors \(x\) and
\(\mathscr{F}x\) with respect to the orthonormal basis coomposed
of the normalised eigenvectors of \(\mathscr{F}\), then
\(c_n(\mathscr{F}x)=i^nc_n(x) \). In particular,
\(|c_n(x)|=|c_n(\mathscr{F}x|)\).

From Lemma \eqref{DDO} wi derive the following theorem, which is
\(L^2\)-version of Theorem 2.2:
\begin{theorem}
\label{RigComRel}%
Let \(\mathscr{L}\) be the differential operator which was
introduced in \textup{Definition \ref{DDL}.}\\[1.5ex]%
\hspace*{2.0ex}\textup{1.}
\begin{minipage}[t]{0.94\linewidth}
A function \(x\) belongs the domain of definition
\(\mathcal{D}_{\mathscr{L}}\) of the operator \(\mathscr{L}\) if
and only if the function \(\mathscr{F}x\) belongs to
\(\mathcal{D}_{\mathscr{L}}\).
\end{minipage}\\[1.5ex]
\hspace*{2.0ex}\textup{2.}
\begin{minipage}[t]{0.94\linewidth}
If \(x\in\mathcal{D}_{\mathscr{L}}\), then
\[\mathscr{F}(\mathscr{L}x)=\mathscr{L}(\mathscr{F}x)\,.\]
\end{minipage}
\end{theorem}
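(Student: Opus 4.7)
The plan is to reduce both parts of the theorem to Lemma \ref{DDO}, exploiting the fact that the operator \(\mathscr{F}\) acts diagonally in the orthonormal basis \(\{e_n\}_{n\geq 0}\) with eigenvalues \(\mu_n=i^n\) of modulus one.

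First, I would fix \(x\in L^2(\mathbb{R})\) and expand it in the Hermite basis: \(x=\sum_{n\geq 0}c_n e_n\), where \(c_n=\langle x,e_n\rangle\). Since \(\mathscr{F}\) is unitary on \(L^2(\mathbb{R})\) and maps each basis vector \(e_n\) to \(i^n e_n\) by \eqref{eigve}, one has \(\mathscr{F}x=\sum_{n\geq 0}i^n c_n e_n\) with convergence in \(L^2(\mathbb{R})\). Thus the Fourier coefficients of \(\mathscr{F}x\) in the basis \(\{e_n\}\) are \(i^n c_n\), and in particular \(|i^n c_n|=|c_n|\).

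For part \textup{1}, I would apply Lemma \ref{DDO} directly. The criterion given there states that \(x\in\mathcal{D}_{\mathscr{L}}\) if and only if \(\sum_{n\geq 0}\lambda_n^2|c_n|^2<\infty\). Since the Fourier coefficients of \(\mathscr{F}x\) are \(i^n c_n\) and \(|i^n c_n|^2=|c_n|^2\), the two series \(\sum\lambda_n^2|c_n|^2\) and \(\sum\lambda_n^2|i^n c_n|^2\) converge or diverge simultaneously. Hence \(x\in\mathcal{D}_{\mathscr{L}}\) if and only if \(\mathscr{F}x\in\mathcal{D}_{\mathscr{L}}\).

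For part \textup{2}, assuming \(x\in\mathcal{D}_{\mathscr{L}}\) so that part \textup{1} ensures \(\mathscr{F}x\in\mathcal{D}_{\mathscr{L}}\), I would compute both sides termwise using the series representation provided by Lemma \ref{DDO}. On the one hand, \(\mathscr{L}x=\sum_{n\geq 0}\lambda_n c_n e_n\), so applying the unitary \(\mathscr{F}\) yields \(\mathscr{F}(\mathscr{L}x)=\sum_{n\geq 0}\lambda_n c_n\, i^n e_n\). On the other hand, since the Fourier coefficients of \(\mathscr{F}x\) are \(i^n c_n\), Lemma \ref{DDO} gives \(\mathscr{L}(\mathscr{F}x)=\sum_{n\geq 0}\lambda_n (i^n c_n)\, e_n\). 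The two series coincide termwise, establishing the equality.

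There is essentially no main obstacle once Lemma \ref{DDO} and the eigenrelation \(\mathscr{F}e_n=i^n e_n\) are in hand; the argument is a clean application of spectral diagonalization. The only point requiring a moment's care is the interchange \(\mathscr{F}(\sum\lambda_n c_n e_n)=\sum \lambda_n c_n\,\mathscr{F}e_n\), which is justified by the boundedness (indeed unitarity) of \(\mathscr{F}\) on \(L^2(\mathbb{R})\), permitting termwise action on an \(L^2\)-convergent series.
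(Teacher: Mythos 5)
Your proposal is correct and follows exactly the route the paper intends: the theorem is stated as a consequence of Lemma \ref{DDO} together with the relation \(c_n(\mathscr{F}x)=i^nc_n(x)\), so the domain criterion \(\sum\lambda_n^2|c_n|^2<\infty\) is preserved under \(\mathscr{F}\) and both sides of the commutation identity have the same expansion \(\sum\lambda_n i^n c_n e_n\). Your write-up is in fact more detailed than the paper's, which leaves these steps implicit.
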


Theorem \ref{BCES} gives a characterization of vectors from the
eigenspaces \(\mathcal{X}_\lambda\) of the Fourier operator
\(\mathscr{F}\) in terms of their Fourier coefficients  of these
vectors with respect to the orthogonal system
\(\lbrace{}h_{n}\rbrace_{n=0,\,1,\,2,\,\ldots\,}\). Recall that
this system  was introduced by
\eqref{LHY}-\eqref{GrS}-\eqref{CAO1}.

Below we present another characterization of the eigenspaces
\(\mathcal{X}_\lambda\) in the spirit of the work \cite{HaTi1} by
Hardy and Titchmarsh, where the description of the subspace
\(\mathcal{X}_1\) was done. (The results of  the works
\cite{HaTi1} and \cite{HaTi2} were reproduced in the book
\cite{Tit}.)

Functions \(x(t)\) which belongs to the eigenspace
\(\mathcal{X}_\lambda,\,\lambda=1,\,-1,\,i,\,-i\) of the Fourier
operator \(\mathscr{F}\) are described as the inverse Melline
transform
\begin{equation}
\label{IMT}%
 x(t)=\frac{1}{2\pi}\int\limits_{-\infty}^{\infty}%
 \varphi\bigg(\frac{1}{2}+i\eta\bigg)t^{-\big(\frac{1}{2}+i\eta\big)}\,d\eta\,,\quad
 0<t<\infty\,,
\end{equation}
where the form of the function \(\varphi(\zeta)\)
 depends on \(\lambda\). (Eigenfunctions \(x(t)\) are either even, if \(\lambda=\pm1\), or
 odd, if \(\lambda=\pm{}i\). Therefore it is enough to describe
 their restrictions on the positive half-axis only.)

 For \(\lambda=\pm1\), \(\varphi\) is of the form
 \begin{subequations}
  \label{DeES}
 \begin{equation}%
 \label{DeES1}
 \varphi\bigg(\frac{1}{2}+\zeta\bigg)
 =\psi(\zeta)\cdot2^{\frac{\zeta}{2}}\Gamma\bigg(\frac{1}{4}+\frac{\zeta}{2}\bigg)\,,
 \end{equation}%
 for \(\lambda=\pm{}i\), \(\varphi\) is of the form
 \begin{equation}%
 \label{DeES2}%
 \varphi\bigg(\frac{1}{2}+\zeta\bigg)
 =\psi(\zeta)\cdot2^{\frac{\zeta}{2}}\Gamma\bigg(\frac{3}{4}+\frac{\zeta}{2}\bigg)\,,
 \end{equation}%
 \end{subequations}
 where the function \(\psi\) is even if \(\lambda=1\) or
 \(\lambda=i\),  and the function \(\psi\) is odd if \(\lambda=-1\) or
 \(\lambda=-i\).

 Thus, for \(\lambda=\pm1\), the representation \eqref{IMT} takes
 the form
 \begin{subequations}
 \label{IMTCo}%
\begin{equation}
\label{IMTCo1}%
 x(t)=\frac{1}{2\pi}\int\limits_{-\infty}^{\infty}%
\psi(i\eta)\cdot2^{\frac{i\eta}{2}}\Gamma\bigg(\frac{1}{4}+\frac{i\eta}{2}\bigg)
 t^{-\big(\frac{1}{2}+i\eta\big)}\,d\eta\,,\quad
 0<t<\infty\,,
\end{equation}
and for \(\lambda=\pm{}i\), the representation \eqref{IMT} takes
 the form
\begin{equation}
\label{IMTCo2}%
 x(t)=\frac{1}{2\pi}\int\limits_{-\infty}^{\infty}%
\psi(i\eta)\cdot2^{\frac{i\eta}{2}}\Gamma\bigg(\frac{3}{4}+\frac{i\eta}{2}\bigg)
 t^{-\big(\frac{1}{2}+i\eta\big)}\,d\eta\,,\quad
 0<t<\infty\,,
\end{equation}
\end{subequations}

  The function \(\psi\), which is defined merely on the imaginary axis ,
  serves as a `free'
 parameter.  The only restrictions on \(\psi\) is its evenness (or oddness)
 and the convergence of the integral
\begin{equation}%
\label{CoCo}%
\int\limits_{\infty}^{\infty}\bigg|\varphi\bigg(\frac{1}{2}+i\eta\bigg)\bigg|^2%
\,d\eta<\infty\,.
\end{equation}
For \(\lambda=\pm1\), the condition \eqref{CoCo} takes the form
\begin{subequations}
\label{CoCoCo}%
\begin{equation}%
\label{CoCoCo1}%
\int\limits_{\infty}^{\infty}\big|\psi(i\eta)\big|^2
\bigg|\Gamma\bigg(\frac{1}{4}+i\frac{\eta}{2}\bigg)\bigg|^2%
\,d\eta<\infty\,,
\end{equation}
for \(\lambda=\pm{}i\), the condition \eqref{CoCo} takes the form
\begin{equation}%
\label{CoCoCo2}%
\int\limits_{\infty}^{\infty}\big|\psi(i\eta)\big|^2
\bigg|\Gamma\bigg(\frac{3}{4}+i\frac{\eta}{2}\bigg)\bigg|^2%
\,d\eta<\infty\,.
\end{equation}
\end{subequations}
\begin{theorem}{\ }\\[0.5ex]%
\label{AnDeES}%
\hspace*{2.0ex}\textup{1.}
\begin{minipage}[t]{0.94\linewidth}
Let the function \(x\) from \(L^2(\mathbb{R})\) belongs to the
eigenspace \(\mathcal{X}_1\) of the Fourier operator
\(\mathscr{F}\). Then the function \(x\) is representable in the
form \eqref{IMTCo1} on the positive half-axis \(\mathbb{R}_+\),
where \(\psi\) is an even function defined on the imaginary axis
 and satisfies the condition \eqref{CoCoCo1}.

\hspace{2.0ex}Conversely, if \(\psi\) is an even function defined
on the imaginary axis and satisfying the condition
\eqref{CoCoCo1}, and the function \(x(t)\) is defined by
\eqref{IMTCo1}  on the positive half-axis \(\mathbb{R}_+\) and is
extended from \(\mathbb{R}_+\) to \(\mathbb{R}\) as an even
function, then \(x\) belongs to the eigenspaces \(\mathcal{X}_1\)
of the operator \(\mathscr{F}\).
\end{minipage}\\[1.0ex]
\hspace*{2.0ex}\textup{2.}
\begin{minipage}[t]{0.94\linewidth}
Let the function \(x\) from \(L^2(\mathbb{R})\) belongs to the
eigenspace \(\mathcal{X}_{-1}\) of the Fourier operator
\(\mathscr{F}\). Then the function \(x\) is representable in the
form \eqref{IMTCo1} on the positive half-axis \(\mathbb{R}_+\),
where \(\psi\) is an odd function defined on the imaginary axis
 and satisfies the condition \eqref{CoCoCo1}.

\hspace{2.0ex}Conversely, if \(\psi\) is an odd function defined
on the imaginary axis and satisfying the condition
\eqref{CoCoCo1}, and the function \(x(t)\) is defined by
\eqref{IMTCo1}  on the positive half-axis \(\mathbb{R}_+\) and is
extended from \(\mathbb{R}_+\) to \(\mathbb{R}\) as an odd
function, then \(x\) belongs to the eigenspaces
\(\mathcal{X}_{-1}\) of the operator \(\mathscr{F}\).
\end{minipage}
\hspace*{2.0ex}\textup{3.}
\begin{minipage}[t]{0.94\linewidth}
Let the function \(x\) from \(L^2(\mathbb{R})\) belongs to the
eigenspace \(\mathcal{X}_{\,i}\) of the Fourier operator
\(\mathscr{F}\). Then the function \(x\) is representable in the
form \eqref{IMTCo2} on the positive half-axis \(\mathbb{R}_+\),
where \(\psi\) is an even function defined on the imaginary axis
 and satisfies the condition \eqref{CoCoCo2}.

\hspace{2.0ex}Conversely, if \(\psi\) is an even function defined
on the imaginary axis and satisfying the condition
\eqref{CoCoCo2}, and the function \(x(t)\) is defined by
\eqref{IMTCo2}  on the positive half-axis \(\mathbb{R}_+\) and is
extended from \(\mathbb{R}_+\) to \(\mathbb{R}\) as an even
function, then \(x\) belongs to the eigenspaces
\(\mathcal{X}_{\,i}\) of the operator \(\mathscr{F}\).
\end{minipage}\\[1.0ex]
\hspace*{2.0ex}\textup{4.}
\begin{minipage}[t]{0.94\linewidth}
Let the function \(x\) from \(L^2(\mathbb{R})\) belongs to the
eigenspace \(\mathcal{X}_{-i}\) of the Fourier operator
\(\mathscr{F}\). Then the function \(x\) is representable in the
form \eqref{IMTCo2} on the positive half-axis \(\mathbb{R}_+\),
where \(\psi\) is an odd function defined on the imaginary axis
 and satisfies the condition \eqref{CoCoCo2}.

\hspace{2.0ex}Conversely, if \(\psi\) is an odd function defined
on the imaginary axis and satisfying the condition
\eqref{CoCoCo2}, and the function \(x(t)\) is defined by
\eqref{IMTCo2}  on the positive half-axis \(\mathbb{R}_+\) and is
extended from \(\mathbb{R}_+\) to \(\mathbb{R}\) as an odd
function, then \(x\) belongs to the eigenspaces
\(\mathcal{X}_{-i}\) of the operator \(\mathscr{F}\).
\end{minipage}\\[1.5ex]
\hspace*{1.0ex}In each of the four cases, the correspondence
between \(x(t)\) and \(\psi(i\eta)\) is one-to-one. Moreover, the
Parseval identity holds, which is of the form
\begin{equation}
\label{RPI1}
\int\limits_{0}^{\infty}|x(t)|^2dt=\int\limits_{\infty}^{\infty}\big|\psi(i\eta)\big|^2
\bigg|\Gamma\bigg(\frac{1}{4}+i\frac{\eta}{2}\bigg)\bigg|^2%
\,d\eta
\end{equation}%
 in the cases \textup{1} or \textup{2}, and
of the form
\begin{equation*}
\int\limits_{0}^{\infty}|x(t)|^2dt=\int\limits_{\infty}^{\infty}\big|\psi(i\eta)\big|^2
\bigg|\Gamma\bigg(\frac{3}{4}+i\frac{\eta}{2}\bigg)\bigg|^2%
\,d\eta
\end{equation*}%
 in the cases \textup{3} or \textup{4}\,.
\end{theorem}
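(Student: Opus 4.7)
The plan is to convert the eigenvalue condition $\mathscr{F}x = \lambda x$, via the Plancherel--Mellin isometry, into a scalar functional equation on the critical line $\mathrm{Re}\,s = 1/2$, and then read off the parity of $\psi$.

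First I would use \eqref{SqF}: $\mathscr{F}^2 x(t) = x(-t)$, so any $x \in \mathcal{X}_\lambda$ satisfies $\lambda^2 x(t) = x(-t)$; hence $x$ is even for $\lambda = \pm 1$ and odd for $\lambda = \pm i$. Then the Fourier integral reduces, on the half-axis, to $\sqrt{2/\pi}$ times the cosine transform (resp.\ $i\sqrt{2/\pi}$ times the sine transform). The Plancherel--Mellin theorem (Plancherel after the change of variable $t = e^u$) gives a unitary correspondence between $L^2(\mathbb{R}_+, dt)$ and $L^2$ of the line $\mathrm{Re}\,s = 1/2$, inverted exactly by \eqref{IMT}; so I parametrize $x$ by $\varphi(s) = \int_0^\infty x(t)\, t^{s-1}\,dt$, thought of as an $L^2$ function on that line.

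Next I would transport $\mathscr{F}$ to the Mellin side. Using the classical formulas $\int_0^\infty u^{s-1}\cos u\,du = \Gamma(s)\cos(\pi s/2)$ and its sine analogue (valid for $0 < \mathrm{Re}\,s < 1$, which contains the critical line), Fubini gives
\begin{equation*}
(\mathcal{M}\mathscr{F}x)(s) = \sqrt{2/\pi}\,\Gamma(s)\cos(\pi s/2)\,\varphi(1-s)
\end{equation*}
in the even case, and the same with $\cos \to \sin$ and an extra factor $i$ in the odd case. The duplication formula $\Gamma(s) = 2^{s-1}\pi^{-1/2}\Gamma(s/2)\Gamma((s+1)/2)$ combined with reflection rewrites these multipliers as
\begin{equation*}
\Gamma(s)\cos(\pi s/2) = \frac{2^{s-1}\sqrt{\pi}\,\Gamma(s/2)}{\Gamma(1/2 - s/2)}, \qquad \Gamma(s)\sin(\pi s/2) = \frac{2^{s-1}\sqrt{\pi}\,\Gamma((s+1)/2)}{\Gamma(1 - s/2)},
\end{equation*}
which, after the shift $\zeta = s - 1/2$, involve exactly the gamma factors $\Gamma(1/4 \pm \zeta/2)$ and $\Gamma(3/4 \pm \zeta/2)$ appearing in the ansatz \eqref{DeES}.

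Substituting \eqref{DeES1} (resp.\ \eqref{DeES2}) into the equation $\lambda\varphi(s) = \sqrt{2/\pi}\,\Gamma(s)\cos(\pi s/2)\,\varphi(1-s)$ (resp.\ its sine version), every power of $2$ and every gamma factor cancels, leaving the trivial functional equations $\lambda\psi(\zeta) = \psi(-\zeta)$ in the even case and $\lambda\psi(\zeta) = i\,\psi(-\zeta)$ in the odd case; these are equivalent to the stated parity of $\psi$ in each of the four cases. The converse is the same computation run backwards: starting from an admissible $\psi$, define $\varphi$ by \eqref{DeES} and $x$ by \eqref{IMT}, then extend to $\mathbb{R}$ by the prescribed parity; the multiplier identity yields $\mathscr{F}x = \lambda x$. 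Bijectivity of the correspondence $x \leftrightarrow \psi$ and the Parseval identity \eqref{RPI1} follow from the isometric nature of the Mellin transform applied to $\varphi(1/2 + i\eta) = \psi(i\eta)\,2^{i\eta/2}\,\Gamma(1/4 + i\eta/2)$, using $|2^{i\eta/2}| = 1$.

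The step I expect to be the main obstacle is the rigorous justification of the Mellin-transport computation, since the Mellin integrals of $\cos u$ and $\sin u$ are only conditionally convergent on the critical line, so that Fubini does not apply directly. The standard remedy is to verify the multiplier identity first on a dense subclass on which the interchange is absolute (for instance, smooth functions compactly supported in $(0,\infty)$, or via a regularization $e^{-\varepsilon t}$ with $\varepsilon \to 0^+$), and then extend to all of $L^2(\mathbb{R}_+)$ by continuity of both sides in the Mellin $L^2$-norm.
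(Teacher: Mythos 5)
Your proposal follows essentially the same route as the paper: reduce via the parity of eigenfunctions to the cosine/sine transform on the half-axis, transport to the Mellin side using the classical integrals $\int_0^\infty u^{\zeta-1}\cos u\,du=\Gamma(\zeta)\cos(\pi\zeta/2)$ together with the reflection and duplication formulas (the paper's identities \eqref{Gam1}--\eqref{Gam2} and Lemma \ref{CCI}), obtain the functional equation $\varphi_x(\zeta)=\lambda^{-1}(\text{multiplier})\,\varphi_x(1-\zeta)$ (the paper's Lemma \ref{MTFT}), and read off the evenness or oddness of $\psi$ after the Gamma factors cancel. Even your flagged obstacle and its remedy --- justifying the interchange of integrals first on a dense class and then extending by $L^2$ continuity --- is exactly what the paper does with its ``c-good'' and ``s-good'' functions, so the proposal is correct and matches the paper's proof.
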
%
Let us recall some well known facts:

 \textsf{The identities for the Euler Gamma-function
 \(\Gamma(\zeta)\)}:
\begin{alignat}{2}%
\label{Ga1}
\Gamma(\zeta+1)&=\zeta\Gamma(\zeta)\,,& &\quad\text{see\,\,\,%
\cite{WhWa}\,,\,\,\,\textbf{12.12},}\\[1.5ex]
\label{Ga2}
\Gamma(\zeta)\Gamma(1-\zeta)&=\dfrac{\pi}{\sin{}\pi\zeta}\,,\,\,\,&
&
\quad\text{see\,\,\,\cite{WhWa}\,,\,\,\,\textbf{12.14},}\\[1.5ex]
\label{Ga3}
\Gamma(\zeta)\Gamma\bigg(\zeta+\frac{1}{2}\bigg)&=2\sqrt{\pi}\,2^{-2\zeta}\Gamma(2\zeta),\,\,&
& \quad\text{see\,\,\,\cite{WhWa}\,,\,\,\,\textbf{12.15}.}
\end{alignat}

\textsf{The Melline transform.}\\ %
The Mellin transform is an integral transform that may be regarded
as the multiplicative version of the two-sided Laplace transform.
The Melline transform serves to link Dirichlet series with
automorphic functions. In particular, the inversion formula plays
a role in the proof of functional equation for Dirichlet series
similar to that for the Riemann zeta-function. See the article
"Melline Transform" on the page 192 of \cite[Volume 6]{ME} and
references there. See also the article "The Melline Trannsform" in
\textsf{Wikipedia}.

 Let \(f(t)\) be a function
defined for \(0\leq{}t<\infty\). \emph{The Melline transform }of
the function \(f(t)\) is said to be the function
\(\varphi(\zeta)\):
\begin{equation}
\label{MT}
\varphi(\zeta)=\int\limits_{0}^{\infty}t^{\,\zeta-1}f(t)\,dt\,.
\end{equation}
The Melline transform exists for those (complex) \(\zeta\) for
which the integral in \eqref{MT} exists. If the function \(f\) is
locally integrable on \((0,\infty)\), and for some real \(u,\,v:
u<v\), the estimates hold:
\begin{equation}%
\label{CFS}%
 f(t)=O(t^{-u})\ \ \textup{as} \ \ t\to{}+0,\quad
f(t)=O(t^{-v})\ \ \textup{as} \ \
t\to{}+\infty\,,%
\end{equation}%
 then the integral in \eqref{MT} exists for \(\zeta\) from the
 vertical strip\\
\(\big\lbrace\zeta:\,\,u<\textup{Re}\zeta<v\big\rbrace\), and the
inversion formula
\begin{equation}%
\label{IFMT}%
f(t)=\frac{1}{2\pi{}i}\int\limits_{c-i\infty}^{c+i\infty}t^{-\zeta}%
\,\varphi(\zeta)\,d\zeta\,,\quad 0<t<\infty\,,
\end{equation}%
 holds, where the
integration can be performed over any vertical straight line
\(\textup{Re}\,\zeta=c\) with \(u<c<v\). In particular, if
\(u<\frac{1}{2}<v\), then \(f\in{}L^2(0,\infty)\), the function
\(\varphi(\zeta)\) is defined on the vertical line
\(\textup{Re}\,\zeta=\frac{1}{2}\), and the Parseval equality
\begin{equation}%
\label{PIMT}%
 \int\limits_{0}^{\infty}|f(t)|^2\,dt=\frac{1}{2\pi}
\int\limits_{-\infty}^{\infty}\bigg|\varphi\bigg(\frac{1}{2}+i\eta\bigg)\bigg|^2d\eta
\end{equation}
 holds. The set of functions \(f(t)\) satisfying the
 conditions \eqref{CFS} for some \(u,v: u<\frac{1}{2}<v \),
  is dense in \(L^2(0,\infty)\). By a standard approximation reasoning,
which uses the Parseval identity \eqref{PIMT}, the Melline
transform \(\varphi(\zeta)\) can be defined for arbitrary function
\(f(t)\in{}L^2(0,\infty)\). This Melline transform
\(\varphi(\zeta)\) is defined as an \(L^2\)-function on the
vertical line
\(\big\lbrace\zeta:\,\textup{Re}\,\zeta=\frac{1}{2}\big\rbrace\).

Vice versa, let \(\varphi(\zeta)\) be an arbitrary function which
is holomorphic in the strip \(\big\lbrace\zeta:\,\,\alpha<
\textup{Re}\,\zeta<\beta\big\rbrace\), where
\(\alpha<\frac{1}{2}<\beta\), and satisfies the estimate
\(|\varphi(\zeta)|=O(|\zeta|^{-2})\) as \(|\zeta|\to\infty\) in
this strip. We  \emph{define} the function \(f(t)\) from
\(\varphi\) by \eqref{IFMT}, where the integral is taken over an
arbitrary line \(\lbrace\zeta:\,\,\textup{Re}\,\zeta=c\rbrace\)
with \(\alpha<c<\beta\). Then the function \(f\) satisfy the
estimates \eqref{CFS} with any arbitrary fixed
\(u,\,v:\,\alpha<u<v<\beta\). The formula \eqref{MT}, applied to
this \(f\), recovers the starting function \(\varphi\). If a
function \(\varphi(\zeta)\) is defined  only on the straight line
\(\lbrace\zeta:\,\,\zeta=\frac{1}{2}+i\eta,\,-\infty<\eta<\infty\rbrace\)
and satisfy the condition
\(\int\limits_{-\infty}^{\infty}\big|%
\varphi\big(\frac{1}{2}+i\eta\big)\big|^2d\eta<\infty\), we assign
a meaning to the formula \eqref{IFMT} by an approximation
reasoning, which uses the Parseval identity \eqref{PIMT}.

\textit{Thus, there is one-to-one correspondence between functions
\(f(t)\) on the positive half-axis satisfying the condition \(
\int\limits_{0}^{\infty}|f(t)|^2\,dt<\infty\) and the functions
\(\varphi\big(\frac{1}{2}+i\eta\big)\) satisfying the condition
\(\int\limits_{-\infty}^{\infty}\big|%
\varphi\big(\frac{1}{2}+i\eta\big)\big|^2d\eta<\infty\). This
correspondence is established by the pair of formulas \eqref{MT}
and \eqref{IFMT}.The Parseval identity  \eqref{PIMT} holds.
 If the functions \(f(t)\) and
\(\varphi\big(\frac{1}{2}+i\eta\big)\) decay fast enough, then the
integrals in \eqref{MT} and \eqref{IFMT} are understood in a
literal sense, as Lebesgue integrals. For general (not fast
decaying) functions \(f\) and \(\varphi\) from \(L^2\), the
integrals in \eqref{MT} and \eqref{IFMT} are assigned with a
meaning by approximation reasonings.}

Before to prove Theorem \ref{AnDeES}, we curry out some
calculation related to the Gamma function. We use the results of
this calculations in the proof.

\begin{lemma}
The following identities hold:
\begin{subequations}
\begin{gather}
\label{Gam1}%
 \sqrt{\frac{2}{\pi}}\,
\Big(\cos\frac{\pi}{2}\zeta\Big)\,\Gamma(\zeta)=%
2^{\,\zeta-\frac{1}{2}}%
\frac{\Gamma\big(\frac{\zeta}{2}\big)}{\Gamma\big(\frac{1}{2}-\frac{\zeta}{2}\big)}\,,\\
\label{Gam2} \sqrt{\frac{2}{\pi}}\,
\Big(\sin\frac{\pi}{2}\zeta\Big)\,\Gamma(\zeta)=2^{\,\zeta-\frac{1}{2}}%
\frac{\Gamma\big(\frac{1}{2}+\frac{\zeta}{2}\big)}{\Gamma\big(1-\frac{\zeta}{2}\big)}\,.
\end{gather}
\end{subequations}
\end{lemma}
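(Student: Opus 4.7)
The plan is to combine the duplication formula \eqref{Ga3} with the reflection formula \eqref{Ga2} in a single, straightforward way; no serial recursion or residue analysis is required. The key observation is that applying \eqref{Ga3} at the half-argument splits $\Gamma(\zeta)$ into a product $\Gamma(\zeta/2)\Gamma(\tfrac12+\zeta/2)$, and each of these two factors pairs with exactly one of the denominators that appears on the right-hand side of the identities via the reflection formula. The cosine (resp.\ sine) that multiplies $\Gamma(\zeta)$ on the left is precisely what is produced by the reflection formula evaluated at $\tfrac12+\zeta/2$ (resp.\ at $\zeta/2$), so everything lines up algebraically.

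Concretely, I would first substitute $\zeta\mapsto \zeta/2$ into \eqref{Ga3} to obtain
\begin{equation*}
\Gamma(\zeta)=\frac{2^{\zeta}}{2\sqrt{\pi}}\,\Gamma\!\Big(\tfrac{\zeta}{2}\Big)\Gamma\!\Big(\tfrac12+\tfrac{\zeta}{2}\Big),
\end{equation*}
so that, after multiplying by $\sqrt{2/\pi}$,
\begin{equation*}
\sqrt{\tfrac{2}{\pi}}\,\Gamma(\zeta)=\frac{2^{\zeta-\frac12}}{\pi}\,\Gamma\!\Big(\tfrac{\zeta}{2}\Big)\Gamma\!\Big(\tfrac12+\tfrac{\zeta}{2}\Big).
\end{equation*}

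For identity \eqref{Gam1}, I multiply both sides by $\cos(\pi\zeta/2)$ and use \eqref{Ga2} with $\zeta$ replaced by $\tfrac12+\zeta/2$, which gives
$\Gamma(\tfrac12+\zeta/2)\Gamma(\tfrac12-\zeta/2)=\pi/\cos(\pi\zeta/2)$; the factor $\cos(\pi\zeta/2)\,\Gamma(\tfrac12+\zeta/2)$ thereby becomes $\pi/\Gamma(\tfrac12-\zeta/2)$, and the remaining $\pi$ cancels, leaving exactly the right-hand side of \eqref{Gam1}. For identity \eqref{Gam2}, I instead multiply by $\sin(\pi\zeta/2)$ and apply \eqref{Ga2} with $\zeta$ replaced by $\zeta/2$, giving $\Gamma(\zeta/2)\Gamma(1-\zeta/2)=\pi/\sin(\pi\zeta/2)$; now it is the factor $\sin(\pi\zeta/2)\,\Gamma(\zeta/2)$ that collapses to $\pi/\Gamma(1-\zeta/2)$, producing the right-hand side of \eqref{Gam2}.

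There is no serious obstacle; the only thing to be careful about is choosing which of the two gamma factors produced by the duplication formula to pair with the reflection formula, and this is dictated by whether the trigonometric factor on the left is $\cos(\pi\zeta/2)$ or $\sin(\pi\zeta/2)$. One should also note that both identities are a priori equalities of meromorphic functions on $\mathbb{C}$, so the manipulation is valid wherever no factor $\Gamma$ has a pole, and the identities then extend to the full domain by meromorphic continuation.
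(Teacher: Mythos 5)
Your proposal is correct and follows essentially the same route as the paper: both apply the duplication formula \eqref{Ga3} at the half-argument to write $\Gamma(\zeta)=\pi^{-1/2}2^{\zeta-1}\Gamma(\tfrac{\zeta}{2})\Gamma(\tfrac12+\tfrac{\zeta}{2})$, and then use the reflection formula \eqref{Ga2} at $\tfrac12+\tfrac{\zeta}{2}$ (for the cosine identity) and at $\tfrac{\zeta}{2}$ (for the sine identity) to convert the appropriate gamma factor into the reciprocal appearing in the denominator. The algebra checks out in both cases.
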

\begin{proof}
From \eqref{Ga2} it follows that
\[\cos\frac{\pi}{2}\zeta=\frac{\pi}{\Gamma\big(\frac{1}{2}-\frac{\zeta}{2}\big)\,%
\Gamma\big(\frac{1}{2}+\frac{\zeta}{2}\big)}\,.\]%
 From \eqref{Ga2} it follows that
 \[\Gamma(\zeta)=
 \pi^{-\frac{1}{2}}\,\Gamma{\textstyle\big(\frac{\zeta}{2}\big)}\,%
 \Gamma{\textstyle\big(\frac{1}{2}+\frac{\zeta}{2}\big)}\,2^{\zeta-1}\,.\]
 Combining the last two formulas, we obtain \eqref{Gam1}.
 Combining the last formula with the formula
 \[\sin\frac{\pi}{2}\zeta=\frac{\pi}{\Gamma\big(\frac{\zeta}{2}\big)\,%
\Gamma\big(1-\frac{\zeta}{2}\big)}\,,\]
 we obtain \eqref{Gam2}.
\end{proof}
\begin{lemma}%
\label{CCI}%
Let \(\zeta\) belongs to the strip \(\Pi\):
\begin{equation}%
\label{VS}%
\Pi=\big\lbrace\zeta:\,0<\textup{Re}\,\zeta<1\,\big\rbrace\,.
\end{equation}%
Then
\begin{subequations}
\label{CcI}
\begin{gather}
\label{CcI1}%
 \int\limits_{0}^{\infty}(\cos{}s)\,s^{\zeta-1}\,ds=
\Big(\cos\,\frac{\pi}{2}\zeta\Big)\,\Gamma(\zeta)\,,\\[1.0ex]
 \label{CcI2}
 \int\limits_{0}^{\infty}(\sin{}s)\,s^{\zeta-1}\,ds=
\Big(\sin\,\frac{\pi}{2}\zeta\Big)\,\Gamma(\zeta)\,,
\end{gather}
\end{subequations}
where the integrals in \eqref{CcI} are understood in the sense
\[\int\limits_{0}^{\infty}
\bigg\lbrace
\begin{matrix}
\cos{}s\\
\sin{}s
\end{matrix}
\bigg\rbrace\, s^{\,\zeta-1}\,ds
=\lim_{R\to+\infty}\int\limits_{0}^{R}\bigg\lbrace
\begin{matrix}
\cos{}s\\
\sin{}s
\end{matrix}
\bigg\rbrace\, s^{\,\zeta-1}\,ds\,,\] and the limit exists
uniformly with respect to \(\zeta\) from any fixed compact subset
of the strip \(\Pi\).
\end{lemma}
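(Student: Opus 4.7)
The plan is to obtain both identities simultaneously from a contour deformation of the standard integral representation of $\Gamma(\zeta)$, and then settle the convergence/uniformity claim separately by an integration-by-parts argument at infinity.

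First I would deal with convergence. For fixed $\zeta\in\Pi$ the integrand $(\cos s)\,s^{\zeta-1}$ is integrable near $s=0$ because $\operatorname{Re}\zeta>0$. At infinity the factor $s^{\zeta-1}$ is not absolutely integrable against $|\cos s|$, so I would integrate by parts once: writing $\cos s=(\sin s)'$ gives
\[
\int_{A}^{R}(\cos s)\,s^{\zeta-1}\,ds
=(\sin s)\,s^{\zeta-1}\Big|_{A}^{R}-(\zeta-1)\int_{A}^{R}(\sin s)\,s^{\zeta-2}\,ds.
\]
Since $\operatorname{Re}\zeta<1$ the boundary term at $R$ tends to $0$, and the remaining integrand is $O(s^{\operatorname{Re}\zeta-2})$, which is absolutely integrable at infinity. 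Thus the improper integral converges, and all estimates are manifestly uniform for $\zeta$ in any compact $K\subset\Pi$ (the exponents $\operatorname{Re}\zeta-1,\operatorname{Re}\zeta-2$ and the factor $|\zeta-1|$ are uniformly bounded on $K$). The sine integral is handled identically.

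For the identities I would use contour integration applied to the holomorphic function $f(z)=z^{\zeta-1}e^{-z}$ with the principal branch $z^{\zeta-1}=e^{(\zeta-1)\log z}$, $-\pi<\arg z<\pi$. In the first quadrant $f$ is holomorphic, so Cauchy's theorem applied to the keyhole contour consisting of $[\varepsilon,R]$ on the real axis, the quarter arc $\{Re^{i\theta}:0\le\theta\le\pi/2\}$, the segment $\{iy:\varepsilon\le y\le R\}$ traversed downward, and the small arc $\{\varepsilon e^{i\theta}:0\le\theta\le\pi/2\}$ traversed clockwise, gives $\oint f=0$. On the large arc $|f(Re^{i\theta})|\le R^{\operatorname{Re}\zeta-1}e^{\pi|\operatorname{Im}\zeta|/2}e^{-R\cos\theta}$, and the Jordan-type bound $\int_0^{\pi/2}e^{-R\cos\theta}d\theta\le \pi/(2R)$ for large $R$ shows the arc integral is $O(R^{\operatorname{Re}\zeta-1})\to 0$ since $\operatorname{Re}\zeta<1$. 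On the small arc the bound is $O(\varepsilon^{\operatorname{Re}\zeta})\to 0$ since $\operatorname{Re}\zeta>0$. Passing to the limit and writing $(iy)^{\zeta-1}=e^{i\pi(\zeta-1)/2}y^{\zeta-1}$ yields
\[
\Gamma(\zeta)=\int_{0}^{\infty}s^{\zeta-1}e^{-s}\,ds=i\cdot i^{\zeta-1}\int_{0}^{\infty}y^{\zeta-1}e^{-iy}\,dy
=e^{i\pi\zeta/2}\int_{0}^{\infty}y^{\zeta-1}e^{-iy}\,dy.
\]
Equivalently $\int_0^\infty y^{\zeta-1}e^{-iy}\,dy=\Gamma(\zeta)e^{-i\pi\zeta/2}$. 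Repeating the argument with the mirror contour in the fourth quadrant (or by complex conjugating after replacing $\zeta$ by $\bar\zeta$) gives $\int_0^\infty y^{\zeta-1}e^{iy}\,dy=\Gamma(\zeta)e^{i\pi\zeta/2}$. Half the sum and $1/(2i)$ times the difference of these two identities produce \eqref{CcI1} and \eqref{CcI2}.

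The main obstacle is the large arc estimate: one must simultaneously handle the mild algebraic growth $R^{\operatorname{Re}\zeta-1}$ and the non-uniform decay $e^{-R\cos\theta}$ near $\theta=\pi/2$, and verify that the resulting bound is uniform on compact $K\subset\Pi$. This is handled by splitting $[0,\pi/2]$ into $[0,\pi/2-\delta]$, where $\cos\theta$ is bounded below uniformly, and $[\pi/2-\delta,\pi/2]$, where the Jordan-type inequality $\cos\theta\ge 1-(2/\pi)\theta$ (shifted by $\pi/2$) yields an $O(1/R)$ bound; both pieces, multiplied by $R^{\operatorname{Re}\zeta}$ and by the bounded factor $e^{\pi|\operatorname{Im}\zeta|/2}$, vanish as $R\to\infty$ uniformly for $\zeta\in K$. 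The small-arc and endpoint-convergence estimates are uniform on $K$ for the same reason, so the limits defining the improper integrals are uniform on compact subsets of $\Pi$, as claimed.
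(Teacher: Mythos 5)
Your proof is correct and follows essentially the same route as the paper: rotate the Euler integral for $\Gamma(\zeta)$ through the first quadrant by applying Cauchy's theorem to $z^{\zeta-1}e^{\mp z}$ on a quarter-circle contour, kill the large arc by a Jordan-lemma estimate (uniform on compact subsets of $\Pi$ since $0<\operatorname{Re}\zeta<1$), and recover \eqref{CcI1}--\eqref{CcI2} as the symmetric and antisymmetric combinations of $\int_0^\infty s^{\zeta-1}e^{\pm is}\,ds=e^{\pm i\pi\zeta/2}\Gamma(\zeta)$. Your additions of a small arc at the origin and the integration-by-parts convergence check are slightly more careful than the paper's write-up but do not change the method.
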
%
\begin{proof}
We consider the closed contour \(L_R\) which is formed by the
interval \([0,R]\) of the real axis, by the circle \mbox{\(C_R=
\lbrace\zeta:\,\zeta=Re^{i\theta},\,0\leq\theta\leq{}\frac{\pi}{2}\rbrace\)},
and by the interval \([0,iR]\) of the imaginary axes. We choose
the counterclockwise orientation on \(L_R\). We consider the
function \(f(z)=z^{\zeta-1}e^{iz}\) in the domain bounded by
contour \(L_R\). Here \(z^{\zeta-1}=e^{(\zeta-1)\,\ln{}z}\), and
\(0\leq{}\arg{}z\leq{}\frac{\pi}{2}\) in the domain. By Cauchy
theorem, \(\int\limits_{L_R}f(z)\,dz=0\,.\) By Jordan lemma,
\(\lim_{R\to+\infty}\int\limits_{C_R}f(z)\,dz=0\). Analysing the
proof of Jordan lemma, we see, that the last limit is uniform with
respect to \(\zeta\) from any fixed compact subset of the strip
\(\Pi\). The limit \(\lim_{R\to+\infty}\int\limits_{0}^{iR}
f(z)\,dz\) exists end is uniform with respect to \(\zeta\) since
the integral here converges absolutely and uniformly with respect
to \(\zeta\). On the interval \([0,iR],\ \,%
z=se^{i\frac{\pi}{2}}\), where \(s\geq0\), and
\(dz=e^{i\frac{\pi}{2}}\,ds\). Hence
\[\int\limits_{0}^{iR}
f(z)\,dz=\int\limits_{0}^{R}s^{\zeta-1}\,e^{i\frac{\pi}{2}(\zeta-1)}e^{-s}\,
e^{i\frac{\pi}{2}}\,ds=
e^{i\frac{\pi}{2}\zeta}\int\limits_{0}^{R}s^{\zeta-1}e^{-s}\,ds\,.\]
Thus,
\[\lim_{R\to+\infty}\int\limits_{0}^{R}s^{\zeta-1}e^{is}\,ds
=e^{i\frac{\pi}{2}\zeta}\,\Gamma(\zeta)\,.\]
Analogously,
\[\lim_{R\to+\infty}\int\limits_{0}^{R}s^{\zeta-1}e^{-is}\,ds
=e^{-i\frac{\pi}{2}\zeta}\,\Gamma(\zeta)\,.\]
\end{proof}
\begin{lemma}%
\label{MTFT}%
 Let a function \(x(t)\) belongs to \(L^2(0,\infty)\), and \(\hat{x}_c(t)\)
 and \(\hat{x}_s(t)\) are the cosine- and sine- Fourier transform
 of the function \(x\):
 \begin{subequations}
 \label{FuT}
 \begin{gather}
 \label{FuTc}
\hat{x}_c(t)=\sqrt{\frac{2}{\pi}}\int\limits_{0}^{\infty}x(s)\,\cos
(ts)\,ds,\\[1.0ex]
\label{FuTs}
\hat{x}_s(t)=\sqrt{\frac{2}{\pi}}\int\limits_{0}^{\infty}x(s)\,\sin
(ts)\,ds
 \end{gather}
 \end{subequations}
Let \(\varphi_{x}(\zeta),\,\varphi_{\hat{x}_c}(\zeta)\) and
\(\varphi_{\hat{x}_s}(\zeta)\) be the Melline transforms of the
functions \(x,\,\hat{x}_c\) and \(\hat{x}_s\) respectively.
\textup{(}All three functions \(x,\,\hat{x}_c,\,\hat{x}_s\) belong
to \(L^2(0,\infty)\), so their Melline transforms exists and are
\(L^2\) functions on the vertical line
\(\zeta=\frac{1}{2}+i\eta,\,-\infty<\eta<\infty\).\textup{)}

Then for \(\zeta=\frac{1}{2}+i\eta,\,-\infty<\eta<\infty\), the
equalities
\begin{subequations}
 \label{MMT}
\begin{gather}
\label{MMTc}%
 \varphi_{\hat{x}_c}(\zeta)=\varphi_{x}(1-\zeta)\cdot
2^{\,\zeta-\frac{1}{2}}%
\frac{\Gamma\big(\frac{\zeta}{2}\big)}{\Gamma\big(\frac{1}{2}-\frac{\zeta}{2}\big)}\,,\\[1.0ex]
\label{MMTs}
 \varphi_{\hat{x}_s}(\zeta)=\varphi_{x}(1-\zeta)\cdot
2^{\,\zeta-\frac{1}{2}}%
\frac{\Gamma\big(\frac{1}{2}+\frac{\zeta}{2}\big)}{\Gamma\big(1-\frac{\zeta}{2}\big)}\,.
\end{gather}
\end{subequations}
hold.
\end{lemma}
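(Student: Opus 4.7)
The plan is to compute \(\varphi_{\hat{x}_c}(\zeta)\) directly on the critical line \(\zeta=\tfrac12+i\eta\) by substituting \eqref{FuTc} into the defining formula \eqref{MT}, exchanging the order of integration, evaluating the inner \(t\)-integral by the scaling \(u=ts\) together with Lemma \ref{CCI}, and then rewriting the resulting \(\Gamma\)-factor via identity \eqref{Gam1}. The derivation of \eqref{MMTs} is verbatim the same, using \eqref{CcI2} and \eqref{Gam2} in place of \eqref{CcI1} and \eqref{Gam1}.

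In detail, I would first restrict to \(x\) in the dense subclass of \(L^2(0,\infty)\) consisting of functions that satisfy the estimates \eqref{CFS} for some \(u,v\) with \(0<u<\tfrac12<v<1\). For such \(x\) the Mellin integral \(\varphi_x\) converges absolutely in a vertical strip containing \(\operatorname{Re}\zeta=\tfrac12\), the cosine transform \(\hat{x}_c\) inherits enough decay at \(0\) and \(+\infty\) for \(\varphi_{\hat{x}_c}\) to converge absolutely on the critical line, and Fubini's theorem applies to the iterated integral defining \(\varphi_{\hat{x}_c}(\zeta)\). Substituting \(u=ts\) in the inner integral and invoking Lemma \ref{CCI} (whose hypothesis \(\operatorname{Re}\zeta\in(0,1)\) is satisfied) yields
\begin{equation*}
\varphi_{\hat{x}_c}(\zeta)=\sqrt{\tfrac{2}{\pi}}\,\cos\!\Big(\tfrac{\pi}{2}\zeta\Big)\,\Gamma(\zeta)\int\limits_0^\infty x(s)\,s^{-\zeta}\,ds
=\sqrt{\tfrac{2}{\pi}}\,\cos\!\Big(\tfrac{\pi}{2}\zeta\Big)\,\Gamma(\zeta)\,\varphi_x(1-\zeta),
\end{equation*}
and identity \eqref{Gam1} converts the prefactor into \(2^{\zeta-1/2}\Gamma(\tfrac{\zeta}{2})/\Gamma(\tfrac12-\tfrac{\zeta}{2})\), giving \eqref{MMTc}.

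The main obstacle is extending this from the nice subclass to an arbitrary \(x\in L^2(0,\infty)\), because for general \(L^2\) inputs the integrals in \eqref{FuTc} and \eqref{MT} are defined only as \(L^2\)-limits and Fubini is not directly applicable. I would handle this by a standard approximation argument: choose \(x_n\to x\) in \(L^2(0,\infty)\) through the nice subclass; by Plancherel for the Fourier cosine transform, \(\hat{x}_{n,c}\to\hat{x}_c\) in \(L^2(0,\infty)\), and then by the Mellin--Plancherel identity \eqref{PIMT} one has \(\varphi_{x_n}(1-\zeta)\to\varphi_x(1-\zeta)\) and \(\varphi_{\hat{x}_{n,c}}(\zeta)\to\varphi_{\hat{x}_c}(\zeta)\) in \(L^2\) of the line \(\operatorname{Re}\zeta=\tfrac12\). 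Because both transforms act as \(L^2\)-isometries (up to the overall constant already built into \eqref{PIMT}), the multiplier in \eqref{MMTc} must have modulus identically one on that line; in particular multiplication by it is a bounded operation on \(L^2\) of the line, and the identity established for the \(x_n\) passes to the limit for \(x\). The same limiting procedure yields \eqref{MMTs}.
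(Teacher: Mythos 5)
Your overall architecture coincides with the paper's: establish \eqref{MMTc} and \eqref{MMTs} on a dense subclass by interchanging the order of integration, evaluate the inner oscillatory integral by the scaling \(\tau=ts\) together with Lemma \ref{CCI}, convert the prefactor by \eqref{Gam1}--\eqref{Gam2}, and then extend to all of \(L^2(0,\infty)\) by Plancherel for the cosine/sine transforms and the Mellin--Plancherel identity \eqref{PIMT}. (The paper's dense subclass is the class of ``c-good''/``s-good'' functions of Definition \ref{DGF} rather than functions obeying \eqref{CFS}; that difference is immaterial, and your closing observation that the multiplier is unimodular on the critical line is a perfectly good substitute for the paper's remark that \(\varphi(\zeta)\mapsto\varphi(1-\zeta)\) is unitary.)

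The genuine gap is the sentence asserting that ``Fubini's theorem applies to the iterated integral defining \(\varphi_{\hat{x}_c}(\zeta)\).'' It does not, for any nonzero \(x\) and any choice of dense subclass: on the line \(\textup{Re}\,\zeta=\tfrac{1}{2}\) the relevant absolute double integral is
\begin{equation*}
\int\limits_{0}^{\infty}\!\!\int\limits_{0}^{\infty}|x(s)|\,|\cos(ts)|\,t^{-1/2}\,ds\,dt
=\bigg(\int\limits_{0}^{\infty}|\cos\tau|\,\tau^{-1/2}\,d\tau\bigg)
\int\limits_{0}^{\infty}|x(s)|\,s^{-1/2}\,ds=+\infty\,,
\end{equation*}
since \(\int_{0}^{\infty}|\cos\tau|\,\tau^{-1/2}\,d\tau\) diverges; the integral \eqref{CcI1} is only conditionally convergent. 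So the interchange cannot be justified by Tonelli/Fubini on the full quadrant, and this is exactly the point on which the paper's proof spends its effort: it truncates the outer integral to \(\int_{0}^{R}\), applies Fubini on the strip \([0,R]\times[0,\infty)\) (where absolute convergence does hold, because \(\int_{0}^{R}t^{-1/2}\,dt<\infty\) and \(x\in L^1\)), and then lets \(R\to\infty\) using the uniform bound \(\big|\int_{0}^{\rho}(\cos\tau)\,\tau^{\zeta-1}\,d\tau\big|\le M(\zeta)\) supplied by Lemma \ref{CCI} together with dominated convergence in \(s\). Your argument needs this regularization inserted; without it the identity is not established even on the dense subclass. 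A secondary over-claim of the same nature: for \(x\) satisfying only the power bounds \eqref{CFS}, the transform \(\hat{x}_c\) tends to zero at infinity by Riemann--Lebesgue but with no guaranteed rate, so the asserted absolute convergence of \(\varphi_{\hat{x}_c}\) on the critical line is also unjustified for your subclass; the paper sidesteps this by building \(\hat{x}_c\in L^1\) directly into the definition of the ``good'' class. The final \(L^2\)-approximation step is correct and matches the paper's.
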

To prove Lemma \ref{MTFT} we need some technical approximation
result.
\begin{definition}
\label{DGF}{\ }\\ %
\hspace*{2.0ex}\textup{1.}
\begin{minipage}[t]{0.94\linewidth}
 A function \(x(t)\) defined on \([0,\infty)\), is said
to be \emph{c-good} if both \(x(t)\) and its cosine Fourier
transform \(\hat{x}_c(t)\) are summable, i.e.
\(\int\limits_{0}^{\infty}|x(t)|\,dt<\infty\),
\(\int\limits_{0}^{\infty}|\hat{x}_c(t)|\,dt<\infty\), and
moreover the functions  \(x(t),\,\hat{x}_c(t)\) are continuous on
\([0,\infty)\) and tend to \(0\) as \(t\to\infty\).
 \end{minipage}{\ }\\ %
\hspace*{2.0ex}\textup{2.}
\begin{minipage}[t]{0.94\linewidth}
 A function \(x(t)\) defined on \([0,\infty)\), is said
to be \emph{s-good} if both \(x(t)\) and its sine Fourier
transform \(\hat{x}_s(t)\) are summable, i.e.
\(\int\limits_{0}^{\infty}|x(t)|\,dt<\infty\),
\(\int\limits_{0}^{\infty}|\hat{x}_s(t)|\,dt<\infty\), and
moreover the functions  \(x(t),\,\hat{x}_s(t)\) are continuous on
\([0,\infty)\) and tend to \(0\) as \(t\to\infty\).
\end{minipage}
\end{definition}
\begin{lemma}
The set of all c-good functions is dense in \(L^2(0,\infty)\).\\
The set of all s-good functions is dense in \(L^2(0,\infty)\).
\end{lemma}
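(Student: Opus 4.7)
The plan is to exhibit a dense subspace of \(L^2(0,\infty)\) all of whose elements are simultaneously c-good and s-good. A natural candidate is
\[
\mathcal{D}=C_c^{\infty}\big((0,\infty)\big),
\]
i.e.\ the space of infinitely differentiable functions with compact support in the open half-line \((0,\infty)\). It is standard that \(\mathcal{D}\) is dense in \(L^2(0,\infty)\), so it suffices to show that every \(x\in\mathcal{D}\) is both c-good and s-good in the sense of Definition~\ref{DGF}.

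First, fix \(x\in\mathcal{D}\). Define its even and odd extensions to \(\mathbb{R}\) by
\[
\tilde{x}_e(s)=x(|s|),\qquad
\tilde{x}_o(s)=\mathrm{sgn}(s)\,x(|s|).
\]
Since \(x\) vanishes in a neighborhood of \(0\), both \(\tilde{x}_e\) and \(\tilde{x}_o\) lie in \(C_c^{\infty}(\mathbb{R})\); this is the one point that needs a moment's thought, and it is the only technical step in the proof. A direct change of variables then gives the relations
\[
\widehat{\tilde{x}_e}(t)=\sqrt{2\pi}\,\hat{x}_c(t),\qquad
\widehat{\tilde{x}_o}(t)=-i\sqrt{2\pi}\,\hat{x}_s(t),
\]
where \(\widehat{\phantom{x}}\) denotes the usual Fourier transform on \(\mathbb{R}\).

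Next, since \(\tilde{x}_e,\tilde{x}_o\in C_c^{\infty}(\mathbb{R})\subset\mathscrn{S}(\mathbb{R})\), their Fourier transforms also lie in \(\mathscrn{S}(\mathbb{R})\). In particular, \(\hat{x}_c\) and \(\hat{x}_s\) are continuous, tend to zero at infinity, and are summable on \([0,\infty)\). The function \(x\) itself is compactly supported inside \((0,\infty)\), hence continuous on \([0,\infty)\) (it vanishes near \(0\)), summable, and tends to zero at infinity. Thus \(x\) satisfies all the requirements of both parts of Definition~\ref{DGF}.

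Finally, combining the two observations, \(\mathcal{D}\) is contained in the set of c-good functions as well as in the set of s-good functions. Since \(\mathcal{D}\) is already dense in \(L^2(0,\infty)\), both sets are dense in \(L^2(0,\infty)\), which is the desired conclusion.
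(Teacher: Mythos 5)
Your proof is correct, but it takes a genuinely different route from the paper. The paper recycles the Hermite functions already constructed in the section: it observes that each even Hermite function \(h_{2k}\) satisfies \((\widehat{h_{2k}})_c=(-1)^k h_{2k}\) and each odd one \(h_{2k+1}\) satisfies \((\widehat{h_{2k+1}})_s=(-1)^k h_{2k+1}\), so that goodness is automatic (the transform is again a polynomial times a Gaussian, hence continuous, summable and decaying), and then invokes the density of the spans of \(\{h_{2k}\}\) and of \(\{h_{2k+1}\}\) in \(L^2(0,\infty)\), which follows from the completeness of the Hermite system on \(\mathbb{R}\) via the even/odd decomposition. You instead take \(C_c^{\infty}((0,\infty))\), extend evenly and oddly to \(C_c^{\infty}(\mathbb{R})\) (correctly noting that vanishing near the origin is what makes the extensions smooth), and use the fact that the Fourier transform maps the Schwartz class to itself; your identities \(\widehat{\tilde{x}_e}=\sqrt{2\pi}\,\hat{x}_c\) and \(\widehat{\tilde{x}_o}=-i\sqrt{2\pi}\,\hat{x}_s\) are right for the stated normalizations. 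Your argument is self-contained and does not depend on the completeness of the Hermite system (Lemma 3.2) or on the eigenfunction relations, which makes it logically lighter; it also produces a single dense class that is simultaneously c-good and s-good. The paper's argument is shorter given the machinery already in place and keeps the whole section within the Hermite-function framework. Both are valid proofs of the lemma.
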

\begin{proof}%
Let \(h_n(t)\) be the functions defined by
\eqref{GrS}-\eqref{LHY}. (Strictly speaking, we consider the
restrictions of the functions  \(h_n(t)\) on \([0,\infty)\).)
 Every function \(h_{2k}\) is c-good: \(\hat{h_{2k}}_c=(-1)^k
 \hat{h_{2k}}\), every function \(h_{2k+1}\) is c-good: \(\hat{h_{2k+1}}_s=(-1)^k
 \hat{h_{2k+1}}\). The set of all finite linear combinations of
 the functions \(h_{2k}\) is dense in \(L^2([0,\infty))\).
  The set of all finite linear combinations of
 the functions \(h_{2k+1}\) also is dense in \(L^2([0,\infty))\).
\end{proof}%
\begin{proof}{Proof of Lemma \ref{MTFT}}.\,%
 We  prove first the equalities \eqref{MMT} assuming that the
 function \(x(t)\) is "good" : c-good if we discuss \eqref{MMTc}
 and s-good if we discuss \eqref{MMTs}. By definition \eqref{MT}
 of the Melline transform,
 \[\varphi_{\hat{x}_c}(\zeta)=%
 \lim_{R\to\infty}\int\limits_{0}^{R}\hat{x}_c(t)t^{\zeta-1}dt\,,\quad
 \textup{Re}\,\zeta=\textstyle\frac{1}{2}\,.\]
Substituting the expression \eqref{FuTc} for \(\hat{x}_c(t)\) into
the last formula, we obtain:
\begin{equation}
\label{T1}%
\varphi_{\hat{x}_c}(\zeta)=%
 \lim_{R\to\infty}\int\limits_{0}^{R}
\bigg(
\sqrt{\frac{2}{\pi}}\int\limits_{0}^{\infty}x(s)\,\cos(ts)\,ds\bigg)
t^{\zeta-1}\,dt\,, \quad
\textup{Re}\,\zeta=\textstyle\frac{1}{2}\,.
\end{equation}
For fixes finite \(R,\,0<R<\infty)\), we change the order of
integration:
\begin{equation}
\label{T2}%
\int\limits_{0}^{R} \bigg(
\sqrt{\frac{2}{\pi}}\int\limits_{0}^{\infty}x(s)\,\cos(ts)\,ds\bigg)
t^{\zeta-1}\,dt=
\int\limits_{0}^{\infty}x(s)\bigg(\sqrt{\frac{2}{\pi}}%
\int\limits_{0}^{R}\cos(ts)\,t^{\zeta-1}\,dt
\bigg)\,ds\,.
\end{equation}%
 The change of order of integration is
justified by Fubini theorem. Changing the variable \(ts=\tau\), we
get
\[\int\limits_{0}^{R}\cos(ts)\,t^{\zeta-1}\,dt=
s^{-\zeta}\int\limits_{0}^{Rs}\cos(\tau)\,\tau^{\zeta-1}\,d\tau\,.\]
Thus%
\begin{multline}%
\label{T3}%
 \int\limits_{0}^{R} \bigg(
\sqrt{\frac{2}{\pi}}\int\limits_{0}^{\infty}x(s)\,\cos(ts)\,ds\bigg)
t^{\zeta-1}\,dt=\\%
=\int\limits_{0}^{\infty}x(s)s^{-\zeta}\bigg(\sqrt{\frac{2}{\pi}}%
\int\limits_{0}^{Rs}\cos(\tau)\,\tau^{\zeta-1}\,d\tau\bigg)\,ds\,.
\end{multline}%
According to  Lemma \ref{CCI}, for every \(s>0\),%
\begin{equation}%
\label{T5}%
\lim_{R\to\infty}\int\limits_{0}^{Rs}\cos(\tau)\,\tau^{\zeta-1}\,d\tau=
\Big(\cos\,\frac{\pi}{2}\zeta\Big)\,\Gamma(\zeta)\,,
\end{equation}%
 For every
\(\zeta: 0<\textup{Re}\zeta<1\),
 the value \(\int\limits_{0}^{\rho}(\cos\tau)\tau^{\zeta-1}\,d\tau\),
 considered as a function of \(\rho\), vanishes at \(\rho=0\),
 is continues function of \(\rho\), and has a finite limit as
 \(\rho\to\infty\),. Therefore there exist a finite
 \(M(\zeta)<\infty\) such that for every
 \(\rho:\,0\leq\rho<\infty\),
 the estimate holds:
 \(\Big|\int\limits_{0}^{\rho}(\cos\tau)\tau^{\zeta-1}\,d\tau\Big|\leq{}M(\zeta)\).
 Here the value \(M(\zeta\) does not depend on \(\rho\). In other
 words,
 \begin{equation*}
 \Bigg|\int\limits_{0}^{Rs}\cos(\tau)\,%
 \tau^{\zeta-1}\,d\tau\Bigg|\leq{}M(\zeta)<
 \infty\quad \forall\, s,R:\,0\leq{}s<\infty,\,0\leq{}R<\infty\,.
 \end{equation*}
 By Lebesgue theorem on dominating convergence,
 \begin{multline}%
 \label{T4}
 \lim_{R\to\infty}\int\limits_{0}^{\infty}x(s)s^{-\zeta}\bigg(\sqrt{\frac{2}{\pi}}%
\int\limits_{0}^{Rs}\cos(\tau)\,\tau^{\zeta-1}\,d\tau\bigg)\,ds=\\
=\int\limits_{0}^{\infty}x(s)s^{-\zeta}\bigg(\sqrt{\frac{2}{\pi}}%
\int\limits_{0}^{\infty}\cos(\tau)\,\tau^{\zeta-1}\,d\tau\bigg)\,ds\,.
\end{multline}%
Unifying \eqref{T1},\,\eqref{T2},\,\eqref{T3},\,\eqref{T4} and
taking into account \eqref{T5} and \eqref{Gam1}, we obtain that
\[\varphi_{\hat{x}_c}(\zeta)=
\int\limits_{0}^{\infty}x(s)\,s^{-\zeta}\,ds\,\cdot\,
2^{\,\zeta-\frac{1}{2}}%
\frac{\Gamma\big(\frac{\zeta}{2}\big)}{\Gamma\big(\frac{1}{2}-\frac{\zeta}{2}\big)}\,.\]
To pass from this formula to \eqref{MMTc}, it remains to observe
that
\[\int\limits_{0}^{\infty}x(s)\,s^{-\zeta}\,ds=\varphi_{x}(1-\zeta)\,.\]
We prove the equality \eqref{MMTc} under assumption that the
function \(x\) is c-good.

To general \(x\in{}L^2([0,\infty))\), the equality \eqref{MMTc}
can be extended by approximation reasoning. Let
\(x\in{}L^2([0,\infty))\), and let \(\big\lbrace{}x_n\big\rbrace\)
be a sequence of c-good functions which converges to \(x\) in
\(L^2([0,\infty))\). The sequence
\(\big\lbrace{}(\widehat{x_n})_c\big\rbrace\) of the
cosine-Fourier transforms will converge to the cosine-Fourier
transform \(\big\lbrace{}\hat{x}_c\big\rbrace\) in
\(L^2([0,\infty))\). Since the correspondence
\(x(t)\to\varphi(\frac{1}{2}+i\eta\)) is an unitary mapping from
\(L^2([0,\infty)\) onto \(L^2(-\infty,\infty)\), see \eqref{IFMT},
then \(\varphi_{\hat{x}_c}\to\varphi_{\hat{x}_c}\), and
\(\varphi_{(\widehat{x_n})_c}\to\varphi_{(\widehat{x_n}})_c\).
(The convergence is in \(L^2\) on the vertical straight line
\(\zeta:\zeta=\frac{1}{2}+i\eta\).) Since the equality
\eqref{MMTc} holds for the functions \(x_n\), and the transform
\(\varphi(\zeta)\to\varphi(1-\zeta)\) is unitary in \(L^2\) on the
vertical line \(\zeta:\zeta=\frac{1}{2}+i\eta\), this equality
can be extended to an arbitrary \(x\in{}L^2([0,\infty))\).

The equality \eqref{MMTs} can be proved analogously.
\end{proof}

\begin{proof}[Proof of \textup{Theorem \ref{AnDeES}}.]{\ } \\[0.5ex]
\hspace*{2.0ex}\textsf{1.}\ Let \(x\) be an eigenfuction of
\(\mathscr{F}\) corresponding to the eigenvalue \(\lambda=1\).
This means that
 \begin{equation}
 \label{CBES1}%
 x(t)=\hat{x}_c(t)\,.
 \end{equation}
 Since the correspondence between functions on \([0,\infty\) and
 their Mellin transforms is one-to-one,
 The equality \eqref{CBES1} is equivalent to the equality
\begin{equation}
 \label{CBES1M}%
\varphi_{x}(\zeta)=\varphi_{\hat{x}_c}(\zeta)\,\quad
\zeta={\textstyle\frac{1}{2}}+i\eta.
 \end{equation}
Taking into account the equality \eqref{MMTc}, we conclude that
the equality \eqref{CBES1} is equivalent to the equalty
\begin{equation}
\label{CBES1Md}%
\varphi_{x}(\zeta)=\varphi_{x}(1-\zeta)\cdot{}
2^{\,\zeta-\frac{1}{2}}%
\frac{\Gamma\big(\frac{\zeta}{2}\big)}{\Gamma\big(\frac{1}{2}-\frac{\zeta}{2}\big)}.
\end{equation}
Substituting \(\zeta=\frac{1}{2}+i\eta\) to the last formula, we
conclude that the equality \eqref{MMTc} is equivalent to the
equality
\begin{equation}%
\label{CBES1Md}%
\varphi_{x}(\tfrac{1}{2}+i\eta)=\varphi_{x}(\tfrac{1}{2}-i\eta)\cdot{}
2^{i\eta}%
\frac{\Gamma\big(\frac{1}{4}+i\frac{\eta}{2}\big)}
{\Gamma\big(\frac{1}{4}-i\frac{\eta}{2}\big)}.
\end{equation}%
The last equality means that the function
\begin{equation}
\label{psee1}%
 \psi_x(i\eta)=\frac{\varphi_{x}(\tfrac{1}{2}+i\eta)}%
 {\Gamma\big(\frac{1}{4}+i\frac{\eta}{2}\big)}\,2^{-i\frac{\eta}{2}}
\end{equation}
is even:
\begin{equation}
\label{Ps1e}
 \psi_x(i\eta)= \psi_x(-i\eta),\quad -\infty<\eta<\infty\,.
\end{equation}
Thus, the condition \eqref{Ps1e}, together with the condition
\[\int\limits_{-\infty}^{\infty}|\psi_{x}(\tfrac{1}{2}+i\eta)\cdot
\Gamma\big(\frac{1}{4}+i\frac{\eta}{2}\big)\,d\eta|^2<\infty\] is
equivalent to the condition  \eqref{MMTc} together with the
condition \(x\in{}L^2([0,\infty)\).

 The statement 1 of Theorem \ref{AnDeES} is proved. The Statements
 2, 3 and 4 are proved in the same way.
\end{proof}

%%%%%%%%%%%%%%%%%%%%%%%%%%%%%%%%%%%%%%%%%%%%%%%%%%%%%%%%%%%%%%%%%%%%%%%%%%%%%%%%%%%%
%%%%%%%%%%%%%%%%%%%%%%%%%%%%%%%%%%%%%%%%%%%%%%%%%%%%%%%%%%%%%%%%%%%%%%%%%%%%%%%%%%%%%%%%

\vspace*{5.0ex}
\noindent
\begin{minipage}[h]{0.45\linewidth}
Victor Katsnelson\\[0.2ex]
Department of Mathematics\\
The Weizmann Institute\\
Rehovot, 76100, Israel\\[0.1ex]
e-mail:\\
{\small\texttt{victor.katsnelson@weizmann.ac.il}}
\end{minipage}
\vspace*{3.0ex}

\noindent
\begin{minipage}[h]{0.45\linewidth}
Ronny Machluf\\[0.2ex]
Department of Mathematics\\
The Weizmann Institute\\
Rehovot, 76100, Israel\\[0.1ex]
e-mail:\\
\texttt{ronny-haim.machluf@weizmann.ac.il}
\end{minipage}

\begin{thebibliography}{MoMo1}
%%%%%%%%%%%%%%%%%%%%%%%%%%%%%%%%%%%%%%%%%%%%%%%%%%%%%%%%%%%%%%%%%%%%%%%%%%%%%
\bibitem[Bur]{Bur} \textsc{Burchnall,\,J.L.}
\textit{Symbolic relations assotiated with Fourier transforms.}
Quart. Journ. of Math., Vol.\textbf{3} (1932), 213\,-\,223.
%%%%%%%%%%%%%%%%%%%%%%%%%%%%%%%%%%%%%%%%%%%%%%%%%%%%%%%%%%%%%%%%%%%%%%%%%%
\bibitem[DiWi]{DiWi} \textsc{Dicke, R.H., Wittke,\,J.P.}
\textit{Introduction to quantum Mechanics}. Addison-Wesley,
Reading MA, and London. 1960. xi+369 pp.
%%%%%%%%%%%%%%%%%%%%%%%%%%%%%%%%%%%%%%%%%%%%%%%%%%%%%%%%%%%%%%%%%%%%%%%%%%%
\bibitem[Dir]{Dir} \textsc{Dirac, P.A.M.} \textit{The Principles of Quantum
 Mechanics.} Third edition. Clarendon Press, Oxford 1947.xii+311 pp.
%%%%%%%%%%%%%%%%%%%%%%%%%%%%%%%%%%%%%%%%%%%%%%%%%%%%%%%%%%%%%%%%%%%%%%%%%%%%%%%
\bibitem[HaTi1]{HaTi1} \mbox{\textsc{Hardy,\,G.H., Titchmarsh,\,E.T.}
\textit{Self-reciprocal functions}.} Quarterly Journ. of Math.
(Oxford Ser.). \textbf{1} (1930), 196\,-\,231.
%%%%%%%%%%%%%%%%%%%%%%%%%%%%%%%%%%%%%%%%%%%%%%%%%%%%%%%%%%%%%%%%%%%%%%%%%%%%%%%
\bibitem[HaTi2]{HaTi2} \textsc{Hardy,\,G.H., Titchmarsh,\,E.T.}
\textit{Formulae connecting different classes of self-reciprocal
functions.} Proc. London Math. Soc. (Ser. \textbf{2}), \textbf{33}
(1931), 225\,-\,232.
%%%%%%%%%%%%%%%%%%%%%%%%%%%%%%%%%%%%%%%%%%%%%%%%%%%%%%%%%%%%%%%%%%%%%%%%%%%%%
\bibitem[KaMa1]{KaMa1} \textsc{Katsnelson,\,V., Machluf,\,R.} The truncated Fourier operator.
I.  arXiv:0807.2190.
%%%%%%%%%%%%%%%%%%%%%%%%%%%%%%%%%%%%%%%%%%%%%%%%%%%%%%%%%%%%%%%%%%%%%%%%%%%%%
\bibitem[KaMa2]{KaMa2} \textsc{Katsnelson,\,V., Machluf,\,R.} The truncated Fourier operator.
II. arXiv:0901.2709
%%%%%%%%%%%%%%%%%%%%%%%%%%%%%%%%%%%%%%%%%%%%%%%%%%%%%%%%%%%%%%%%%%%%%%%%%%%%%
\bibitem[ME]{ME}\textsc{Encyclopaedia of Mathematics.} Kluver
Acad. Publ.,
Dordracht\(\boldsymbol{\cdot}\)Boston\(\boldsymbol{\cdot}\)London
1995.
%%%%%%%%%%%%%%%%%%%%%%%%%%%%%%%%%%%%%%%%%%%%%%%%%%%%%%%%%%%%%%%%%%%%%%%%%%%%
\bibitem[Tit]{Tit} \textsc{Titchmarch,\,E.C.} \textit{Introduction to the Theory
of Fourier Integrals.} Clarendon Press, Oxford 1937. x+394pp.
Third Edition. Chelsea PublishingCo., New\,York 1986.
%%%%%%%%%%%%%%%%%%%%%%%%%%%%%%%%%%%%%%%%%%%%%%%%%%%%%%%%%%%%%%%%%%%%%%%%%%%%%%%%%%
\bibitem[WhWa]{WhWa} \mbox{\textsc{Whittaker,\,E.T.,\,\,Watson,\,G.N.}
\textit{A\,Course\,of\,Modern\,Analysis.}} \mbox{Fourth\,Edition.
Cambridge\,Univ.\,Press, Cambridge 1927.} \mbox{vi+608 pp.}
\end{thebibliography}
\end{document}